\numberwithin{equation}{section}
\newtheorem{thm}[equation]{Theorem}
\newtheorem{prop}[equation]{Proposition}
\newtheorem{cor}[equation]{Corollary}
\newtheorem{lem}[equation]{Lemma}
\theoremstyle{definition}
\newtheorem{exmp}[equation]{Example}
\newtheorem{qu}[equation]{Question}
\newtheorem{rem}[equation]{Remark}
\theoremstyle{plain}
\renewcommand{\deg}{\operatorname{\mathsf{deg}}}
\newcommand\ind{\operatorname{\mathsf{ind}}}
\renewcommand\exp{\operatorname{\mathsf{exp}}}
\renewcommand\log{\operatorname{\mathsf{log}}}
\renewcommand{\setminus}{\smallsetminus}
\newcommand\op{\operatorname{\mathsf{op}}}
\newcommand\Br{\operatorname{\mathsf{Br}}}
\newcommand\Frob{\operatorname{\mathsf{Frob}}}
\newcommand\cchar{\operatorname{\mathsf{char}}}
\newcommand\N{\operatorname{\mathsf{N}}}
\newcommand\CH{\operatorname{\mathsf{CH}}}
\renewcommand{\leq}{\leqslant}
\renewcommand{\geq}{\geqslant}
\begin{document}
\title{Symbol length in positive characteristic}
	
\date{\today}
	
\author{Fatma Kader B\.{i}ng\"{o}l}
	
\address{Universiteit Antwerpen, Departement Wiskunde, Middelheim\-laan~1, 2020 Ant\-werpen, Belgium.}
\email{FatmaKader.Bingol@uantwerpen.be}
		
\begin{abstract}
    We show that any central simple algebra of exponent $p$ in prime characteristic $p$ that is split by a $p$-extension of degree $p^n$ is Brauer equivalent to a tensor product of $2\cdot p^{n-1}-1$ cyclic algebras of degree $p$. 
    If $p=2$ and $n\geq3$, we improve this result by showing that such an algebra is Brauer equivalent to a tensor product of $5\cdot2^{n-3}-1$ quaternion algebras. 
    Furthermore, we provide new proofs for some bounds on the minimum number of cyclic algebras of degree $p$ that is needed to represent Brauer classes of central simple algebras of exponent $p$ in prime characteristic $p$, which have previously been obtained by different methods.
		
    \medskip\noindent
    {\sc Classification} (MSC 2020): 16K20, 13A35
		
    \medskip\noindent
    {\sc{Keywords:}} cyclic algebra, symbol length, positive characteristic
\end{abstract}
	
\maketitle

\section{Introduction}
Let $F$ be a field. We set $\mathbb{N}_+=\mathbb{N}\setminus\{0\}$.
Let $n\in\mathbb{N}_+$, $L/F$ be a cyclic field extension of degree $n$ and let $\sigma$ be a generator of its Galois group. 
For a given $b\in F^{\times}$, the rules
\begin{equation*}
    j^n=b\quad\text{and}\quad xj=j\sigma(x)\quad \text{for all}\quad x\in L
\end{equation*}
determine on the $L$-vector space $L\oplus jL\oplus\ldots\oplus j^{n-1}L$ a ring multiplication turning it into a central simple $F$-algebra of degree $n$, which is denoted by $$[L/F,\sigma,b).$$   
Any central simple $F$-algebra of degree $n$ containing a cyclic field extension $L/F$ of degree $n$ is isomorphic to $[L/F,\sigma,b)$ for certain $b\in F^{\times}$; see \cite[Theorem 5.9]{Alb39}.
Such an algebra is called a \emph{cyclic algebra}. 
We denote by $\Br(F)$ the Brauer group of $F$, and for $n\in\mathbb{N}_+$, we denote by $\Br_n(F)$ the $n$-torsion part of $\Br(F)$.

\begin{thm}[A. A. Albert, A. S. Merkurjev, A. A. Suslin \cite{Alb39}, \cite{MS1982}]\label{Albert-Merkur-Suslin}
    Let $n\in\mathbb{N}_+$. Assume $F$ contains a primitive $n$-th root of unity or $n$ is a power of $\cchar F$. 
    Then $\Br_{n}(F)$ is generated by the classes of cyclic algebras of degree $n$.
\end{thm}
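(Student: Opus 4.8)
The plan is to reduce to the case where $n=\ell^k$ is a power of a prime $\ell$, and then to appeal separately to the Merkurjev--Suslin theorem (when $\ell\neq\cchar F$) and to the Albert--Teichm\"uller theory of $p$-algebras (when $\ell=\cchar F=p$). For the reduction, write $n=\prod_{\ell\mid n}q_\ell$ with $q_\ell=\ell^{v_\ell(n)}$ and use the primary decomposition $\Br_n(F)=\bigoplus_{\ell\mid n}\Br_{q_\ell}(F)$. Granting the prime-power case, a class $\beta\in\Br_n(F)$ is a finite sum of classes of cyclic algebras $[L_{\ell,j}/F,\sigma_{\ell,j},b_{\ell,j})$ of prime-power degrees $q_\ell$; after padding each $\ell$-block with split cyclic algebras of degree $q_\ell$ so that $j$ runs over a common range, one groups the summands by $j$. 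For fixed $j$ the fields $L_{\ell,j}$ have pairwise coprime degrees, so their compositum $L_j$ is cyclic over $F$ of degree $n$; since $\N_{L_{\ell,j}/F}(L_{\ell,j}^{\times})\supseteq(F^{\times})^{q_\ell}$ and the $q_\ell$ are coprime, the cosets $b_{\ell,j}\,\N_{L_{\ell,j}/F}(L_{\ell,j}^{\times})$ have a common element $b_j$, and a short computation with the associated Galois characters gives $\sum_{\ell}[L_{\ell,j}/F,\sigma_{\ell,j},b_{\ell,j})=[L_j/F,\sigma_j,b_j)$. Hence $\beta$ is a sum of cyclic algebras of degree $n$. (The padding requires a cyclic extension of $F$ of each degree $q_\ell$; such an extension exists whenever $\Br_{q_\ell}(F)\neq0$, and otherwise the prime $\ell$ contributes nothing.)

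For $n=\ell^k$ with $\ell\neq\cchar F$, the hypothesis forces $\mu_{\ell^k}\subseteq F$, and I would invoke the Merkurjev--Suslin theorem: the degree-two norm residue homomorphism $K_2(F)/\ell^k\to\HH^2(F,\mu_{\ell^k}^{\otimes2})$ is surjective. Twisting by a primitive $\ell^k$-th root of unity identifies $\HH^2(F,\mu_{\ell^k}^{\otimes2})$ with $\Br_{\ell^k}(F)$, so $\Br_{\ell^k}(F)$ is generated by classes of symbol algebras $(a,b)_{\ell^k}$ of degree $\ell^k$; writing $a=c^{\ell^j}$ with $c$ not an $\ell$-th power, such a class equals that of $(c,b^{\ell^j})_{\ell^k}$, which contains $F(\sqrt[\ell^k]{c})$ and is therefore genuinely a cyclic algebra of degree $\ell^k$. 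For $n=p^k$ with $\cchar F=p$, I would use Teichm\"uller's theorem that $\Br_p(F)$ is generated by the classes $[a,b)_p$ of cyclic $p$-algebras of degree $p$ (equivalently, Albert's result that every $p$-algebra of exponent $p$ is Brauer equivalent to a tensor product of such algebras). To pass from degree $p$ to degree $p^k$ one observes that $\Br_{p^k}(F)\neq0$ forces $\Br_p(F)\neq0$ (the Brauer group being torsion), so $F$ admits a cyclic extension of degree $p$; and in characteristic $p$ every cyclic extension of degree $p$ embeds into one of degree $p^k$ (Artin--Schreier--Witt theory, using that $a\notin\wp(F)$ implies $a^{p^{k-1}}\notin\wp(F)$). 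Each generating algebra $[a,b)_p$ is split by the degree-$p$ Artin--Schreier extension of $a$, hence by a cyclic extension $L/F$ of degree $p^k$ containing it, and therefore equals $[L/F,\sigma,d)$ for some $d\in F^{\times}$; thus $\Br_{p^k}(F)$ is generated by cyclic algebras of degree $p^k$.

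The hard part is that there is no uniform or elementary route to the two prime-power cases: one depends on the degree-two Merkurjev--Suslin (Bloch--Kato) theorem and the other on Albert's and Teichm\"uller's structure theory of $p$-algebras (or, equivalently, on Kato's cohomological description of $\HH^2_{p^k}(F)$). The remaining ingredients---the primary decomposition, the gluing of coprime cyclic symbols via their norm groups, and the Artin--Schreier--Witt lifting promoting degree-$p$ symbols to degree $p^k$---are comparatively routine.
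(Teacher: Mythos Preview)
The paper does not give its own proof of this statement. \Cref{Albert-Merkur-Suslin} is recorded as a background theorem, attributed via the citations \cite{Alb39} and \cite{MS1982}, and is invoked later (notably in the proof of \Cref{r+frob=complex-p-torsion}) only in the special case $n=p=\cchar F$. There is therefore no proof in the paper to compare your attempt against.

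Your outline is nonetheless a correct high-level account of how the result is obtained in the literature. The case $n=\ell^k$ with $\mu_{\ell^k}\subseteq F$ is exactly the degree-two surjectivity of the norm-residue map (Merkurjev--Suslin), and the case $n=p^k$ with $\cchar F=p$ is Albert's theorem that every exponent-$p$ algebra is Brauer equivalent to a tensor product of cyclic algebras of degree $p$, promoted to degree $p^k$ via the fact that in characteristic $p$ every cyclic extension of degree $p$ embeds in one of degree $p^k$. Your parenthetical justification for the last point, that $a\notin\wp(F)$ implies $a^{p^{k-1}}\notin\wp(F)$, is correct: if $a^p=x^p-x$ then with $y=a-x$ one has $y^p=a^p-x^p=-x$, hence $a=y+x=-(y^p-y)\in\wp(F)$; iterate.

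One caution about the gluing step. Under the hypothesis of the theorem, either $n$ is already a power of $\cchar F$ (so no gluing is needed) or $\mu_n\subseteq F$ and $n$ is coprime to $\cchar F$. In the second case your padding argument requires, for \emph{every} prime $\ell\mid n$, a cyclic extension of $F$ of degree $q_\ell$, not only for those $\ell$ with $\Br_{q_\ell}(F)\neq0$: without it the compositum $L_j$ has degree strictly less than $n$, and you cannot realise the remaining primary components by cyclic algebras of degree $n$ in the sense of the paper's definition (which requires $L/F$ to be a field extension of degree $n$). This is an edge case in the literal formulation rather than a defect in your strategy, and it has no bearing on what the paper actually uses.
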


Given a central simple $F$-algebra $A$, we denote by $\deg A$, $\ind A$ and $\exp A$, the degree, index and exponent of $A$, respectively.
For $n\in\mathbb{N}_+$, we denote by $A^{\otimes n}$ the $n$-fold tensor product $A\otimes_F\ldots\otimes_FA$.
\begin{rem}\label{generic-indecomp-Karpenko}
    Let $p$ be a prime number.
    In \cite{SchBerg1992}, a generic division algebra $D(p^n,p^e)$ of degree $p^n$ and exponent $p^e$, with $n,e\in\mathbb{N}$, $e\leq n$, is constructed as follows: 
    let $D$ be a division algebra over a field $F$ (of any characteristic) of degree and exponent equal to $p^n$. 
    Let $K$ be the function field of the Severi-Brauer variety corresponding to $D^{\otimes p^e}$ and set $D(p^n,p^e)=D\otimes_FK$. 
    For $e\geq2$, it is shown that $\ind D(p^n,p^e)^{\otimes p}=p^{n-1}$. 
    This implies in particular that $D(p^n,p^e)$ is indecomposable for any $n\in\mathbb{N}$ and $e\geq2$.
    In \cite{Karp1995}, N. Karpenko shows that $D(p^n,p)$ is indecomposable for any $n\in\mathbb{N}$ except when $p=n=2$.
\end{rem}

From these results the study of the so-called symbol length arose. 
For a central simple $F$-algebra $A$, the \emph{$n$-symbol length of} $A$, denoted by $\lambda_{n}(A)$, is the smallest $m\in\mathbb{N}$ such that $A$ is Brauer equivalent to a tensor product of $m$ cyclic algebras of degree $n$.
We set $\lambda_n(A)=\infty$ if $A$ is not Brauer equivalent to a tensor product of cyclic algebras of degree $n$.

Note that, if $n\in\mathbb{N}_+$ and $A$ is a central simple $F$-algebra with $\exp A$ dividing $n$, one always has $\lambda_n(A)\geq\log_n\ind A$.
The challenge is to bound the $n$-symbol length in terms of the index.  
We have the following results for algebras of exponent $2$ in small index.
\begin{thm}\label{Albert-Tignol-Rowen-2-symbol-small degree}
    Let $A$ be a central simple $F$-algebra with $\exp A=2$. Let $n\in\mathbb{N}_+$ be such that $\ind A=2^n$.
\begin{enumerate}[$(1)$]
    \item \textnormal{(Albert \cite{Albert32})} If $n\leq2$, then $\lambda_2(A)=n$.
    \item \textnormal{(J. -P. Tignol--L. Rowen \cite{Tig78}, \cite{Row84})} If $n=3$, then $3\leq\lambda_2(A)\leq4$.
\end{enumerate}	
\end{thm}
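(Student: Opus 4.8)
The plan is to handle the three ranges $n\le1$, $n=2$, and $n=3$ in turn, in each case combining an elementary lower bound with a structural decomposition for the upper bound. For the lower bounds, observe that a tensor product of $m$ cyclic algebras of degree $2$ has index dividing $2^m$; hence if $A$ is Brauer equivalent to such a product then $2^n=\ind A$ divides $2^m$, so $m\ge n$. This yields $\lambda_2(A)\ge n$ in part (1) and $\lambda_2(A)\ge3$ in part (2). The cases $n=0$ and $n=1$ of (1) are then immediate: if $n=0$ then $A$ is split, and if $n=1$ then the division algebra Brauer equivalent to $A$ has degree $2$, hence is a quaternion algebra, which is cyclic of degree $2$; so $\lambda_2(A)=n$ in both cases.

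For $n=2$, let $D$ be the division algebra Brauer equivalent to $A$, so $\deg D=4$ and $\exp D=2$. I would invoke Albert's structure theorem for algebras of degree $4$ \cite{Albert32}, which gives a decomposition $D\cong Q_1\otimes_F Q_2$ into quaternion $F$-algebras $Q_1,Q_2$. As quaternion algebras are cyclic of degree $2$, this yields $\lambda_2(A)\le2$, and with the lower bound, $\lambda_2(A)=2$.

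For the upper bound in part (2), let $D$ be the division algebra Brauer equivalent to $A$, so $\deg D=8$ and $\exp D=2$. The first step is to pass to a quadratic extension over which $D$ becomes accessible: by Rowen's analysis of maximal subfields \cite{Row84}, $D$ contains a quadratic subextension $K$ of $F$; since $K$ embeds in $D$ one has $\ind(D\otimes_F K)=4$, while $\exp(D\otimes_F K)$ divides $2$, so Albert's degree-$4$ theorem applied over $K$ yields quaternion $K$-algebras $Q_1',Q_2'$ with $\res_{K/F}[D]=[Q_1']+[Q_2']$ in $\Br(K)$. The second step is to descend this two-symbol relation from $K$ to $F$: using the classical description of $\Ker\bigl(\res_{K/F}\colon\Br_2(F)\to\Br_2(K)\bigr)$ as a single family of quaternion symbols, together with corestriction from $K$ to $F$ to control the two classes $[Q_i']$, one rewrites $[D]$ as a sum of at most four quaternion symbols over $F$, giving $\lambda_2(A)\le4$. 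The genuine obstacle is this descent, since a quaternion algebra over $K$ need not be extended from $F$; making it effective is exactly where Rowen's structural input and Tignol's cohomological bookkeeping \cite{Tig78} enter, and it is this step that forces the bound $4$ rather than $3$.
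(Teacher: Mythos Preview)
The paper does not give its own proof of this theorem; it is stated with attributions to Albert, Tignol, and Rowen and used as a black box. So there is no in-paper argument to compare against, only the remark (just after \Cref{Sivatski-Eli-symbol-crossed-product}) that for $n=3$ ``the existence of a subfield of the underlying division algebra which is a Galois extension with Galois group $(\mathbb{Z}/2\mathbb{Z})^3$ due to Rowen \ldots\ plays a crucial role.''

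Your treatment of the lower bound and of the cases $n\leq 2$ is correct and standard: the index of a tensor product of $m$ quaternion algebras divides $2^m$, a degree-$2$ division algebra is a quaternion algebra, and Albert's theorem decomposes any degree-$4$, exponent-$2$ division algebra as a biquaternion algebra.

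For $n=3$, your outline departs from the route the paper points to. The classical argument does not proceed by picking a single quadratic subfield and then descending a biquaternion decomposition from $K$ to $F$; rather, one first invokes the much stronger structural fact (Rowen in characteristic $2$, Tignol otherwise) that the degree-$8$, exponent-$2$ division algebra admits a maximal subfield $L/F$ which is Galois with group $(\mathbb{Z}/2\mathbb{Z})^3$, and then uses the crossed-product description with respect to $L/F$ to write the class as a sum of at most four quaternion symbols. Your descent step, as written, is not a proof: you correctly note that $\Ker(\res_{K/F})$ on $\Br_2$ consists of single quaternion classes $[K/F,\sigma,b)$, but you have not explained how to produce a class $\gamma\in\Br_2(F)$ with $\lambda_2(\gamma)\leq 3$ and $\res_{K/F}(\gamma)=[Q_1']+[Q_2']$. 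Corestriction alone does not supply this, since $\corr_{K/F}$ of a quaternion $K$-algebra is only known to be biquaternion over $F$, and $\corr\circ\res=2$ gives no information here because $\exp A=2$. Making this descent precise is essentially equivalent to reproving the triquadratic splitting result, which is why the literature (and the paper's own commentary) isolates that as the key input.
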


In the case where $n$ is a power of the characteristic, we know more about the $n$-symbol length. 
\begin{thm}[M. Florence \cite{Flo13}]\label{Flo-p-symbol} 
    Assume that $\cchar F=p$ and let $e,n\in\mathbb{N}_+$. Let $A$ be a central simple $F$-algebra with $\exp A=p^e$ and $\ind A=p^n$. 
    Then $\lambda_{p^e}(A)\leq p^n-1$.	
\end{thm}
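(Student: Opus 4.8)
The plan is to argue by induction on $n$, reducing at once to the case where $A=D$ is a division algebra of degree $p^{n}$, since $\lambda_{p^{e}}$ depends only on the Brauer class. If $n=1$, then $D$ has degree $p=\cchar F$ and hence, by \cref{Albert-Merkur-Suslin}, is (isomorphic to) a cyclic algebra of degree $p$; as $\exp D$ divides $\ind D=p$ we get $e=1$, so $\lambda_{p^{e}}(D)=\lambda_{p}(D)=1\le p-1=p^{1}-1$. For the inductive step it suffices to establish an estimate of the shape
\[
\lambda_{p^{e}}(A)\ \le\ p\cdot\lambda_{p^{e}}(A')+(p-1)
\]
for a suitable central simple algebra $A'$ whose index divides $p^{n-1}$; indeed, rewriting this as $\lambda_{p^{e}}(A)+1\le p\bigl(\lambda_{p^{e}}(A')+1\bigr)$ and iterating down to the base case yields $\lambda_{p^{e}}(A)+1\le p^{n}$, which is the claim.

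To produce $A'$ I would appeal to the structure theory of $p$-algebras: for $n\ge 2$, the division algebra $D$ is split down to index at most $p^{n-1}$ over some degree-$p$ extension $E/F$ --- for instance a purely inseparable one $E=F(c^{1/p})$ with $c\in F^{\times}\setminus F^{p}$, arising from a degree-$p$ purely inseparable subfield of $D$. Applying the inductive hypothesis over $E$ to $D\otimes_{F}E$ (whose index divides $p^{n-1}$) writes it, up to Brauer equivalence, as a tensor product of at most $p^{n-1}-1$ cyclic $E$-algebras, which --- after re-enlarging their underlying cyclic extensions by Artin--Schreier--Witt lifts, should the exponent have dropped over $E$ --- we take of degree $p^{e}$. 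The remaining task is to \emph{descend} this decomposition to $F$: one transfers each cyclic factor by the corestriction $\corr_{E/F}\colon\Br(E)\to\Br(F)$, using a Rosset--Tate-type bound (the corestriction of a degree-$p^{e}$ cyclic algebra along a degree-$p$ extension is Brauer equivalent to a tensor product of at most $p-1$ degree-$p^{e}$ cyclic algebras over $F$), and corrects for the defect $\corr_{E/F}\res_{E/F}[D]=p\,[D]$ by one further corestriction; assembling the contributions gives the displayed recursion.

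I expect the descent to $F$ to be the main obstacle. Over a field of characteristic $p$ one has $\corr_{E/F}\res_{E/F}[D]=p\,[D]$, which carries strictly less information than $[D]$ when $\exp D=p$, so the argument must be arranged so as to recover $[D]$ itself rather than only $p\,[D]$ from the data over $E$ --- this is exactly where one uses that $E$ lowers the index without splitting $D$. Intertwined with this is the bookkeeping needed to keep every cyclic factor of degree \emph{exactly} $p^{e}$: passing to $E$ may lower the exponent, and corestriction a priori yields cyclic algebras of smaller degree, so one must repeatedly re-enlarge the relevant cyclic extensions while controlling the number of symbols this costs --- tightly enough that the count grows by no more than the factor $p$ and the additive constant $p-1$ at each step. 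Making all of this precise is the heart of the proof.
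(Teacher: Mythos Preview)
There is a genuine gap in your descent step, and it is precisely the one you flag but do not resolve. You choose $E=F(c^{1/p})$ purely inseparable and propose to push the decomposition of $[A_E]$ down to $F$ by corestriction. But $\corr_{E/F}\circ\res_{E/F}$ is multiplication by $p$ on $\Br(F)$, so corestricting your decomposition of $[A_E]=\res_{E/F}[A]$ yields only $p[A]$: for $e=1$ this is the zero class and you have lost all information about $[A]$, and for $e\ge 2$ you obtain a class of strictly smaller exponent than $[A]$. No ``one further corestriction'' can repair this. Moreover, the Rosset--Tate bound you invoke is a theorem about \emph{separable} extensions and in any case gives $[E:F]$ symbols per symbol, not $[E:F]-1$; for your purely inseparable $E/F$ no such formula is available. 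Finally, the step ``re-enlarge the cyclic extensions by Artin--Schreier--Witt lifts'' to force every factor to have degree exactly $p^{e}$ needs a precise statement (that a cyclic algebra of degree $p^{e'}$ is Brauer equivalent to one of degree $p^{e}$ for $e'\le e$) and a check that this costs no additional symbols.

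You should also be aware that the paper does not supply its own proof of \Cref{Flo-p-symbol}: the result is quoted from Florence, whose argument uses the function field of the Severi--Brauer variety of $A$ as a generic splitting field --- a method entirely different from your induction-plus-corestriction sketch. The paper's own techniques (the Frobenius morphism and purely inseparable splitting fields) recover Florence's bound only in the case $p=2$, $e=1$; see \Cref{2-symbol-length-char=2}. The statement in the paper closest in spirit to your recursion is \Cref{behavior-symbol-length-pur-insep-deg-p}: for $e=1$ and $K/F$ purely inseparable of degree $p$ one has $\lambda_p(A)\le p\cdot\lambda_p(A_K)$, which is exactly the shape you want. But the proof there does not use corestriction at all; it builds an explicit purely inseparable splitting field of $A$ over $F$ of controlled degree and then invokes \Cref{Albert-purins->decomp-palg}. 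That is the descent mechanism your argument is missing.
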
 

Florence's argument involves a generic splitting field given by the function field of the Severi-Brauer variety attached to the algebra.
Previously, P. Mammone and A. Merkurjev \cite{MamMerkur1991} had obtained  better bounds for the special case of cyclic algebras. 
Their argument is based on a computation of the corestriction of a cyclic algebra.

\begin{thm}[Mammone--Merkurjev]\label{MM-cyclic-p-symbol}
    Assume that $\cchar F=p$. Let $e,n\in\mathbb{N}_+$. Let $C$ be a cyclic $F$-algebra with $\exp A=p^e$ and $\deg C=p^n$. 
    Then $\lambda_{p^e}(A)\leq p^{n-e}$.
\end{thm}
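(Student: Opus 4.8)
The plan is to exhibit $C$ as the corestriction, from the subfield of its splitting field of degree $p^{n-e}$, of a single cyclic algebra of degree $p^e$, and then to control how such a corestriction decomposes into symbols over $F$. Write $C=[L/F,\sigma,b)$ with $\Gal(L/F)=\langle\sigma\rangle$ of order $p^n$; since $\langle\sigma\rangle$ is cyclic, for each $0\le i\le n$ there is a unique intermediate field $L_i$ with $[L_i:F]=p^i$ and $\Gal(L/L_i)=\langle\sigma^{p^i}\rangle$, and each $L_i/F$ is cyclic. I would use the standard facts that $[L/F,\sigma,c)$ is split exactly when $c\in N_{L/F}(L^\times)$, that $[L_{n-e}/F,\sigma|_{L_{n-e}},c)\sim[L/F,\sigma,c^{p^e})$ for $c\in F^\times$, and the corestriction formula for cyclic algebras: for $F\subseteq E\subseteq L$ with $[E:F]=d$ and $a\in E^\times$,
\[
\corr_{E/F}\bigl([L/E,\sigma^{d},a)\bigr)\sim[L/F,\sigma,N_{E/F}(a)).
\]

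Set $M:=L_{n-e}$, so $[M:F]=p^{n-e}$ and $[L:M]=p^e$. The hypothesis $\exp C=p^e$ makes $C^{\otimes p^e}$ split, hence $[M/F,\sigma|_M,b)$ split, i.e.\ $b=N_{M/F}(a)$ for some $a\in M^\times$. Feeding this into the corestriction formula with $E=M$ yields
\[
C\sim\corr_{M/F}(D),\qquad D:=[L/M,\sigma^{p^{n-e}},a),
\]
where $D$ is a \emph{single} cyclic $M$-algebra of degree $p^e$, because $\Gal(L/M)=\langle\sigma^{p^{n-e}}\rangle$ has order $p^e$. So the theorem reduces to the claim that the corestriction along $M/F$ of one cyclic algebra of degree $p^e$ is Brauer equivalent to a tensor product of at most $[M:F]=p^{n-e}$ cyclic $F$-algebras of degree $p^e$. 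I would prove this by filtering $\corr_{M/F}$ through the tower $F=L_0\subset L_1\subset\dots\subset L_{n-e}=M$ of degree-$p$ steps and settling a single step: if $[K:F]=p$, then the corestriction of any cyclic $K$-algebra of degree $p^e$ is Brauer equivalent to a tensor product of at most $p$ cyclic $F$-algebras of degree $p^e$. Concretely, one writes $K=F(\theta)$ with $\theta^p-\theta\in F$, realizes the given algebra as $[N/K,\tau,c)$ with $N/K$ cyclic of degree $p^e$ and $c\in K^\times$, expresses $c$ in the $F$-basis $1,\theta,\dots,\theta^{p-1}$ of $K$, and expands $\corr_{K/F}$ by the explicit transfer identities for cyclic $p$-algebras; iterating over the $n-e$ steps multiplies the symbol count by at most $p$ each time, giving $\lambda_{p^e}(C)\le p^{n-e}$. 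One also uses that in characteristic $p$ every cyclic extension of degree dividing $p^e$ embeds into one of degree exactly $p^e$, so each symbol produced may be taken of degree exactly $p^e$.

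The main obstacle is this degree-$p$ corestriction computation itself — the explicit evaluation, for $[K:F]=p$, of $\corr_{K/F}$ of an arbitrary cyclic $K$-algebra of degree $p^e$ as a bounded tensor product of cyclic $F$-algebras of degree $p^e$; this is precisely ``the computation of the corestriction of a cyclic algebra'', and it is where the norm and trace of $K/F$ interact with the $p^e$-symbol. Once that formula is in hand, the reduction $C\sim\corr_{M/F}(D)$ together with the induction over the $n-e$ steps of the tower is routine; in particular the base case $n=e$ is immediate, since then $C$ is itself a cyclic algebra of degree $p^e$ and $\lambda_{p^e}(C)=1=p^{n-e}$.
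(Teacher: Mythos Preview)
The paper does not actually prove \Cref{MM-cyclic-p-symbol}; it is stated as a cited result of Mammone and Merkurjev, with the remark that ``their argument is based on a computation of the corestriction of a cyclic algebra'' and that they use ``the corestriction argument of Tignol''. The paper's own contribution is only a new proof of the special case $e=1$ (\Cref{symbol-length-cyclic-exp=p-char=p}), and that proof uses an entirely different mechanism: one takes a purely inseparable subfield $K\subseteq C$ with $[K:F]=p^{n-1}$, observes that $C_K$ is Brauer equivalent to a cyclic algebra of degree $p$, and then invokes \Cref{behavior-symbol-length-pur-insep}, which in turn rests on the Frobenius machinery of \Cref{multipurinsep-ind-reduce to p+cyclic} and \Cref{behavior-symbol-length-pur-insep-deg-p}. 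No corestriction appears anywhere.

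Your proposal therefore does not follow the paper's approach at all; it is a sketch of the original Mammone--Merkurjev/Tignol argument. The reduction $C\sim\corr_{M/F}(D)$ with $D$ a single cyclic $M$-algebra of degree $p^e$ is correct and cleanly argued, and the idea of filtering $\corr_{M/F}$ through the tower $F=L_0\subset\cdots\subset L_{n-e}=M$ is the right inductive framework. However, there is a genuine gap at exactly the point you flag: the degree-$p$ step. Your suggested attack --- writing $c$ additively in the $F$-basis $1,\theta,\dots,\theta^{p-1}$ of $K$ and ``expanding'' --- does not work as stated, because the second slot of a cyclic symbol is multiplicative, not additive; an $F$-linear decomposition of $c$ gives no control over the Brauer class of $[N/K,\tau,c)$. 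The actual Tignol computation proceeds differently (exploiting, among other things, Albert--Witt embeddings of cyclic $p$-extensions and the projection formula in the appropriate direction), and it is not a routine expansion. So your outline correctly locates the heart of the matter but does not supply the key identity, and the method you propose for that step would not produce it.
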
 

In \Cref{Albert-Tignol-Rowen-2-symbol-small degree}, for the case of $n=3$, the existence of a subfield of the underlying division algebra which is a Galois extension with Galois group $(\mathbb{Z}/2\mathbb{Z})^3$ due to Rowen \cite{Row78}, \cite{Row84} plays a crucial role. 
More generally, \Cref{Albert-Merkur-Suslin} implies that any central simple algebra of exponent $p$, over a field of characteristic $p$ or containing a primitive $p$-th root of unity, admits a splitting field which is a Galois extension with Galois group $(\mathbb{Z}/p\mathbb{Z})^n$ for some $n\in\mathbb{N}_+$.
Hence it is natural to consider the problem to determine the $p$-symbol length for such a family of algebras.  

\begin{qu}\label{p-symbol-p-elemantary-crossed-product?}
    Let $A$ be a central simple $F$-algebra with $\exp A=p$.
    Let $n\in\mathbb{N}_+$ and assume that there exists a Galois extension with Galois group $(\mathbb{Z}/p\mathbb{Z})^n$ that splits $A$.
    What is the $p$-symbol length of $A$?
\end{qu}

If $p=2$ and $n\leq3$, then \Cref{Albert-Tignol-Rowen-2-symbol-small degree} answers \Cref{p-symbol-p-elemantary-crossed-product?}.
Let us provide other known results in this context.
\begin{thm}\label{Sivatski-Eli-symbol-crossed-product}
    Let $A$ be a central simple $F$-algebra with $\exp A=p$.
    Let $n\in\mathbb{N}_+$ and assume that there exists a Galois extension with Galois group $(\mathbb{Z}/p\mathbb{Z})^n$ that splits $A$.
\begin{enumerate}[$(1)$]
    \item \textnormal{(A. S. Sivatski \cite{Siv2013})} If $p=2$, $n=4$, and $\cchar F\neq 2$, then $\lambda_2(A)\leq18$. 
    \item \textnormal{(E. Matzri \cite{Matz2014})} If $p=3$, $n=2$ and $F$ contains a primitive $3$rd root of unity, then $\lambda_3(A)\leq 31$.
\end{enumerate}    
\end{thm}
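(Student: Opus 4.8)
\emph{Proof proposal for \Cref{Sivatski-Eli-symbol-crossed-product}.}
The statement collects two results established by rather different, characteristic-specific techniques (Sivatski for $p=2$, Matzri for $p=3$), so I would keep a common skeleton but treat the two parts separately. The first move is to isolate the essential case $\ind A=p^n$: when $\ind A$ is smaller, $A$ is split by a sub-extension of the given Galois extension of rank $<n$ and one proceeds by induction on the rank, the anchors being $\lambda_p(A)\le 1$ in the index-$p$ case and, for $p=2$, the bounds of \Cref{Albert-Tignol-Rowen-2-symbol-small degree}. Once $\ind A=p^n$, the splitting field $M/F$ is a maximal subfield of the underlying division algebra $D$, so $D=(M/F,G,f)$ is a crossed product with $G\cong(\mathbb{Z}/p\mathbb{Z})^n$ and a $2$-cocycle $f$ whose class has order dividing $p$ in $\HH^2(G,M^\times)$ since $\exp A=p$.

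The workhorse is then a descent along the degree-$p$ subfields of $M$. For each such subfield $N$ the extension $M/N$ is cyclic of degree $p$, so $A_N=A\otimes_F N$ is split by a cyclic degree-$p$ extension and hence Brauer equivalent to a \emph{single} symbol over $N$. The core of each proof is to reassemble $A$ over $F$ out of these ``local'' symbols, by combining them with explicit manipulations of the cocycle $f$ and with projection-formula/corestriction identities of the shape $\corr_{N/F}(a,x)_p=(a,\N_{N/F}(x))_p$ for $a\in F^\times$, so as to turn the data attached to the various $N$ into a bounded tensor product of symbols over $F$. For $p=2$, $n=4$ Sivatski carries this out using in addition Rowen's structure theory for degree-$8$ algebras possessing a $(\mathbb{Z}/2\mathbb{Z})^3$-Galois maximal subfield, and the bound $\lambda_2\le 4$ in that case: one peels off quaternion factors so as to drop the index while keeping the resulting algebra split by a controlled multiquadratic extension, and tallying the quaternions produced along the way yields the bound $18$. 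For $p=3$, $n=2$ Matzri runs the analogous induction, but the arithmetic of norm groups of \emph{cubic} rather than quadratic extensions is far less forgiving; he relies on a chain lemma for degree-$3$ symbols to rewrite and merge the local data, and the bookkeeping gives the (much larger) bound $31$.

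I expect the genuine obstacle in both cases to be exactly this descent/reassembly step. In the biquaternion situation ($p=2$, $n=2$) it is painless, since Albert's normalization exhibits $D$ as a tensor product of two quaternions; already for $n\ge 3$, and for $p$ odd with $n\ge 2$, this fails, because one cannot in general choose the crossed-product generators $u_g$ with $u_g^p\in F^\times$ --- the obstruction being measured by classes in a relative norm group $N^\times/\N_{M/N}(M^\times)$ --- and the crux of both proofs is to bound how many symbols are needed to absorb these obstructions. It is this combinatorial cost, compounded by the less flexible norm arithmetic of cubic extensions, that produces the concrete numbers $18$ and $31$; the present paper revisits this circle of ideas, and sharpens the corresponding bounds, in the characteristic-$p$ setting.
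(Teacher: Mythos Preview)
The paper does not prove \Cref{Sivatski-Eli-symbol-crossed-product}: it is stated in the introduction purely as a survey of known results, with attribution to Sivatski \cite{Siv2013} and Matzri \cite{Matz2014}, and no proof is given. There is therefore nothing in the paper to compare your proposal against. Note also that both items lie outside the standing hypothesis of the paper (which assumes $\cchar F=p$ throughout), so the Frobenius and purely-inseparable techniques developed later do not apply here and are not intended to.

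Your sketch is a plausible high-level narrative of how crossed-product arguments of this type proceed, but it is not a proof: the actual content of both cited papers lies precisely in the ``descent/reassembly'' step you wave at, and the specific bounds $18$ and $31$ come from concrete computations that your outline does not reproduce. If you want to include a proof here, you would need to either reproduce those computations or simply cite the original papers, as the author does.
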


We call a finite field extension $L/F$ a \emph{$p$-extension} if there exist $n\in\mathbb{N}$ and a tower of fields $(L_i)_{i=0}^{n}$ with $L_0=F$, $L_n=L$, and where $L_i/L_{i-1}$ is a cyclic extension of degree $p$ for $i=1,\ldots,n$.
Clearly, a Galois extension with Galois group $(\mathbb{Z}/p\mathbb{Z})^n$ is a $p$-extension.

We consider \Cref{p-symbol-p-elemantary-crossed-product?} over fields of characteristic $p$ (\Cref{section:Galois-p-elemantary-split}).
We investigate the behaviour of $p$-symbol length under finite field extension of various kind (\Cref{multipurinsep-ind-reduce to p+cyclic}, \Cref{behavior-symbol-length-pur-insep-deg-p}, \Cref{behavior-symbol-length-pur-insep}, \Cref{behavior-p-symbol-length-cyclic-ext-deg=p}, \Cref{behavior-p-symbol-length-p-ext}).
In prime characteristic $p$, we show that the $p$-symbol length of any central simple algebra of exponent $p$ that is split by a $p$-extension of degree $p^n$ is bounded by $2\cdot p^{n-1}-1$ (\Cref{p-symbol-length-Galois-(Z/2)^p}). 
If $p=2$ and $n\geq3$, we improve this bound by showing that the $2$-symbol length of such an algebra is bounded above by $5\cdot2^{n-3}-1$ (\Cref{2-symbol-length-Galois-(Z/2)^n}).
Furthermore, we obtain new and more elementary proofs for \Cref{Flo-p-symbol} and \Cref{MM-cyclic-p-symbol} above in certain cases.
In \Cref{2-symbol-length-char=2}, we retrieve the bounds from \Cref{Flo-p-symbol} for $p=2$ and $e=1$, and in \Cref{symbol-length-cyclic-exp=p-char=p}, we retrieve the bounds from \Cref{MM-cyclic-p-symbol} for $e=1$.
Our arguments are based on the study of purely inseparable splitting fields and the Frobenius morphism. 

The following two sections serve as preliminaries. For the theory of central simple algebras, we refer to \cite{Alb39} and \cite{Jacob1996}.

\section{Some classical results}
Let $p$ be always a prime number. We assume throughout the paper that $\cchar F=p$. Let $L/F$ be a cyclic extension of degree $p$. 
Then $L=F(i)$ with $i^p-i=a\in F$ for some $i\in L\setminus F$. 
We denote the cyclic algebra $[L/F,\sigma,b)$ where $\sigma$ is a generator of the Galois group of $L/F$ and $b\in F^{\times}$ simply by $[a,b)_{p,F}$. 
Letting further $b=j^p$, the algebra $[a,b)_{p,F}$ is generated by $i$ and $j$, which are subject to the rules
$$i^p-i=a,\quad j^p=b,\quad ji=(i-1)j.$$ 
Furthermore, any cyclic $F$-algebra of degree $p$ is of the form $[a,b)_{p,F}$ for some $a\in F$ and $b\in F^{\times}$. 

For a finite field extension $K/F$, let $\N_{K/F}:K\to F$ denote the norm map.
For two central simple $F$-algebras $A$ and $B$, we write $A\sim B$ to indicate that $A$ and $B$ are Brauer equivalent.
\begin{prop}\label{cyclic-alg-properties}
    Let $a\in F$and $b,b'\in F^{\times}$. The following hold: 
\begin{enumerate}[$(1)$]
    \item $[a,b)_{p,F}\otimes_F[a,b')_{p,F}\sim[a,bb')_{p,F}$.
    \item $[a,b)_{p,F}$ is split if and only if $b\in\N_{F(i)/F}(F(i)^{\times})$ where $i^p-i=a$.
\end{enumerate}	  
\end{prop}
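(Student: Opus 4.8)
The plan is to verify both statements by working directly with the explicit generators and relations of the symbol algebra $[a,b)_{p,F}$, namely the generators $i,j$ subject to $i^p-i=a$, $j^p=b$, and $ji=(i-1)j$. For part~(1), I would exhibit an explicit splitting of the tensor product. Write $A=[a,b)_{p,F}$ with generators $i,j$ and $A'=[a,b')_{p,F}$ with generators $i',j'$. Inside $A\otimes_F A'$ consider the subalgebra generated by $i\otimes 1$ (equivalently $1\otimes i'$, after identifying the two copies of the splitting field $F(i)\cong F(i')$) and by $j\otimes j'$. The key computation is that the element $k:=i\otimes 1-1\otimes i'$ is central and satisfies $k^p-k=a-a=0$, hence $k^p=k$; this means $k$ lies in the prime subfield's worth of idempotent-generating elements, and in fact the element $e=k^{p-1}$ (or an appropriate polynomial in $k$) is a nontrivial central idempotent, so $A\otimes_F A'$ is not a division algebra and decomposes. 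On the complementary factor one checks that $i\otimes 1$ and $j\otimes j'$ satisfy exactly the relations defining $[a,bb')_{p,F}$: indeed $(j\otimes j')^p=b\otimes b'=bb'$ and $(j\otimes j')(i\otimes 1)=(ji)\otimes j'=((i-1)j)\otimes j'=(i-1)\otimes 1\cdot(j\otimes j')$. A dimension count then forces $A\otimes_F A'\sim[a,bb')_{p,F}$. Alternatively, and more cleanly, I would phrase this via the cohomological identification of $[a,b)_{p,F}$ with the cup product of the class of $a$ in $\HH^1(F,\mathbb{Z}/p\mathbb{Z})$ (via Artin–Schreier) and the class of $b$ in $F^\times/F^{\times p}$, under which bilinearity in the second argument is immediate; but since the paper is developing things elementarily, the generator-and-relations argument is preferable.

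For part~(2), the statement is that $[a,b)_{p,F}$ is split if and only if $b\in\N_{F(i)/F}(F(i)^\times)$, where $i^p-i=a$. One direction: if $b=\N_{F(i)/F}(c)$ for some $c\in F(i)^\times$, I would produce an explicit zero divisor or, better, show the algebra contains $F(i)$ as a subfield in a way that forces it to be $\mathrm{M}_p(F)$ — concretely, replacing $j$ by $j'=c^{-1}j$ or a similar twist so that $(j')^p=\N_{F(i)/F}(c)^{-1}b=1$, which gives an element of order dividing $p$ that together with $i$ generates a split algebra; one sees $(j')^p=1$ forces $[a,1)_{p,F}$, which is visibly split since $b=1$ is a norm. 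More carefully: use part~(1) to reduce to showing $[a,\N_{F(i)/F}(c))_{p,F}$ is split, and then note that the cyclic algebra $[F(i)/F,\sigma,\N_{F(i)/F}(c))$ is split because the class of a norm is trivial in the relative Brauer group $\Br(F(i)/F)$ — this is the classical fact that $[L/F,\sigma,b)$ is split iff $b\in\N_{L/F}(L^\times)$ for cyclic $L/F$, which in turn is a restatement of Hilbert~90 / the exactness of the sequence computing $\Br(L/F)$ for cyclic $L$. Conversely, if $[a,b)_{p,F}$ is split, then $b$ represents the trivial element of $\Br(F(i)/F)$, hence is a norm from $F(i)$. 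So part~(2) is really the specialization to degree $p$ of the general cyclic-algebra splitting criterion, and I would cite \cite[Theorem~5.9]{Alb39} or the standard reference for $[L/F,\sigma,b)\sim 0\iff b\in\N_{L/F}(L^\times)$ and simply instantiate it.

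The main obstacle is essentially bookkeeping rather than conceptual: in part~(1) one must be careful that $F(i)$ and $F(i')$ are canonically identified (both are the Artin–Schreier extension attached to the same class $a+\wp(F)$, where $\wp(x)=x^p-x$), so that $i\otimes 1-1\otimes i'$ genuinely lands in $F$ after the identification — if one instead used $i^p-i=a$ and $(i')^p-i'=a'$ with $a\neq a'$ but $a-a'\in\wp(F)$, the same argument works after adjusting $i'$ by an element of $F$, but here $a=a'$ so this is automatic. One should also take care that the "complementary factor" really has dimension $p^2$: since $\deg(A\otimes A')=p^2$ over its (split) center-component, and the subalgebra generated by $i$ and $j\otimes j'$ visibly surjects onto a homomorphic image of $[a,bb')_{p,F}$, which is $p^2$-dimensional and simple, the map is an isomorphism onto that factor, giving $A\otimes_F A'\cong\mathrm{M}_p([a,bb')_{p,F})$ and hence the asserted Brauer equivalence. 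For part~(2) there is no real obstacle beyond correctly invoking the known splitting criterion for general cyclic algebras; the degree-$p$, characteristic-$p$ case is entirely parallel to the classical Kummer case.
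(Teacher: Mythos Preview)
The paper does not give a proof of this proposition at all: it simply cites \cite[Theorem~5.11 and Theorem~5.14]{Alb39}. Your proposal for part~(2) is correct and in the end amounts to exactly the same move --- you reduce to the classical splitting criterion for general cyclic algebras $[L/F,\sigma,b)$ and cite Albert --- so there is nothing to compare there.

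For part~(1), your direct computation contains a genuine error. The element $k=i\otimes 1-1\otimes i'$ is \emph{not} central in $A\otimes_F A'$: it fails to commute with $j\otimes 1$ (and with $1\otimes j'$), since $(j\otimes 1)k-k(j\otimes 1)=-j\otimes 1$. What is true is that $k$ lies in the center of the proper subalgebra generated by $i\otimes 1$, $1\otimes i'$, and $j\otimes j'$, but this subalgebra has dimension at most $p^3<p^4$, so finding zero divisors there does not by itself yield the Brauer equivalence. Likewise, your concluding dimension count --- that the subalgebra $C=\langle i\otimes 1,\,j\otimes j'\rangle\simeq[a,bb')_{p,F}$ forces $A\otimes_F A'\cong\mathbb{M}_p([a,bb')_{p,F})$ --- is incomplete: by the double centralizer theorem one gets $A\otimes_F A'\simeq C\otimes_F Z_{A\otimes A'}(C)$ with the centralizer of degree~$p$, but you have not shown this centralizer is split, and that is precisely the content of the statement you are trying to prove.

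Your two alternative routes (bilinearity of the cup-product pairing $\HH^1(F,\mathbb{Z}/p)\times F^\times/F^{\times p}\to\Br_p(F)$, or invoking the general fact that $b\mapsto[L/F,\sigma,b)$ is a homomorphism $F^\times\to\Br(L/F)$) are both valid and standard; the latter is exactly what Albert's Theorem~5.11 provides and is what the paper cites. So the cleanest fix is simply to drop the explicit tensor computation and cite Albert, as the paper does.
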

\begin{proof}
    See \cite[Theorem 5.11 and Theorem 5.14]{Alb39}.
\end{proof} 
 
Purely inseparable field extensions occur as splitting fields for central simple $F$-algebras of $p$-power exponent. 
These extensions are useful tools in the study of the symbol length.
\begin{thm}[Albert]\label{Albert-purins->decomp-palg} 
    Let $A$ be a central simple $F$-algebra. 
    Let $n\in\mathbb{N}_+$ and $b_1,\ldots,b_n\in F^{\times}$ be such that the field $K=F(\sqrt[p]{b_1},\ldots,\sqrt[p]{b_n})$ splits $A$. 
    There exist some $a_1,\ldots,a_n\in F$ such that $$A\sim[a_1,b_1)_{p,F}\otimes_F\ldots\otimes_F[a_n,b_n)_{p,F}.$$
\end{thm}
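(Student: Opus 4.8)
The natural strategy is induction on $n$, with the key reduction being: if $K = F(\sqrt[p]{b_1},\ldots,\sqrt[p]{b_n})$ splits $A$, then over the field $F' = F(\sqrt[p]{b_1})$, the smaller purely inseparable extension $K = F'(\sqrt[p]{b_2},\ldots,\sqrt[p]{b_n})$ still splits $A\otimes_F F'$, so by induction $A\otimes_F F' \sim [a_2,b_2)_{p,F'}\otimes\cdots\otimes[a_n,b_n)_{p,F'}$ for suitable $a_i \in F'$. The crux is then a base case / one-step statement that must be extracted from \Cref{cyclic-alg-properties}: \emph{if $F(\sqrt[p]{b})$ splits a central simple $F$-algebra $B$, then $B\sim [a,b)_{p,F}$ for some $a\in F$.} For this, I would first invoke Albert's theorem that a central simple algebra split by a degree-$p$ extension $F(\sqrt[p]{b})$ has index dividing $p$ and hence is Brauer equivalent to a cyclic algebra of degree $p$ containing that field, i.e. of the form $[a,b)_{p,F}$ by \Cref{cyclic-alg-properties} — this is the standard fact that the relative Brauer group $\Br(F(\sqrt[p]{b})/F)$ is generated by such symbols. (If one prefers to stay self-contained, this is \cite[Theorem 5.9 or Theorem 5.11]{Alb39} in the guise already cited for $p$-algebras.)

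The genuine obstacle is \emph{descent}: after the inductive step we have a decomposition over $F' = F(\sqrt[p]{b_1})$ with symbols whose ``slot-$a$'' entries $a_i$ live in $F'$, and we must replace them by elements of $F$ while simultaneously producing the extra factor $[a_1,b_1)_{p,F}$. The plan is to analyze the corestriction (norm) map $\corr_{F'/F}\colon \Br(F')\to\Br(F)$. Since $[F':F]=p$, we have $\corr_{F'/F}\circ\res_{F'/F} = p\cdot\id$, and because $\exp$ of everything in sight divides $p$ (all factors are degree-$p$ cyclic algebras, and we may first replace $A$ by its symbol-decomposition over $F'$), $\corr_{F'/F}(\res_{F'/F}[A]) = 0$. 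On the other hand, $\res_{F'/F}[A] = \sum_{i=2}^n [a_i,b_i)_{p,F'}$, and the projection formula gives $\corr_{F'/F}[a_i,b_i)_{p,F'} = [\operatorname{Tr}_{F'/F}(a_i)\text{ or similar}, b_i)_{p,F}$ — more precisely, Mammone--Merkurjev-type formulas compute $\corr_{F'/F}[a,b)_{p,F'}$ for $b\in F^\times$ as a single symbol $[\tilde a, b)_{p,F}$ with $\tilde a\in F$ expressible via the trace. Combining, one gets a relation in $\Br(F)$ that expresses a sum of symbols $[\tilde a_i, b_i)_{p,F}$ ($i\ge 2$) as something split; but this is not quite what we want.

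Rather than push the corestriction approach, the cleaner route — and the one I expect the paper to take — is to argue directly that $[A] - [a_1', b_1)_{p,F}$ is split by $F'$ for a suitable choice of $a_1' \in F$, reducing the count by using the inductive decomposition over $F'$ together with a lifting lemma for symbols along $F'/F$. Concretely: pick any $a_1 \in F$; the algebra $A \otimes_F [a_1,b_1)_{p,F}^{\op}$ (or the difference class) is still split by $K$, and one wants to arrange that it is already split by the \emph{smaller} tower, peeling off one generator. The technical heart is showing that an element of $F'$ appearing as a symbol slot can be corrected, modulo the relation $\sqrt[p]{b_1}\in F'$, to an element of $F$ at the cost of a single new symbol $[\,\cdot\,, b_1)_{p,F}$; this is precisely where the identity $\sqrt[p]{b_1}$ being a $p$-th root enters, via the additivity $[a,b)_{p,F} \otimes [a',b)_{p,F} \sim [a+a',b)_{p,F}$ (the characteristic-$p$ analogue of \Cref{cyclic-alg-properties}(1) in the first slot). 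I would therefore organize the proof as: (i) state and prove the one-variable case using \Cref{cyclic-alg-properties}; (ii) set up the induction and the descent lemma along a degree-$p$ purely inseparable extension; (iii) assemble. The main difficulty is step (ii) — controlling the slot entries under descent — and I anticipate it requires either an explicit corestriction computation in characteristic $p$ (à la Mammone--Merkurjev) or a careful direct manipulation of the defining relations $i^p - i = a$, $j^p = b$ of $[a,b)_{p,F}$ together with the substitution making $\sqrt[p]{b_1}$ available.
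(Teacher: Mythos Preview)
The paper does not actually prove this statement: its proof consists solely of the citation ``See \cite[Theorem 7.28]{Alb39}.'' So there is no argument in the paper to compare your proposal against, and any correct proof you give would already exceed what the paper provides.

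That said, your inductive setup is the right one, but you have misidentified the ``genuine obstacle.'' The descent you worry about is in fact trivial in characteristic $p$, because of the identity
\[
[a,b)_{p,E} \;\simeq\; [a^p,b)_{p,E}
\]
for any field $E$ of characteristic $p$ (take the new Artin--Schreier generator $i' = i^p = i + a$; one checks directly that $(i')^p - i' = a^p$ and $j i' = (i'-1)j$). The paper itself uses exactly this identity later, in the proofs of \Cref{multipurinsep-ind-reduce to p+cyclic} and \Cref{behavior-symbol-length-pur-insep-deg-p}.

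With this in hand, your induction goes through immediately. Set $F' = F(\sqrt[p]{b_1})$; since Frobenius is a ring homomorphism, $F'^p \subseteq F$. By induction $A_{F'} \sim \bigotimes_{i=2}^{n} [a_i, b_i)_{p,F'}$ with $a_i \in F'$. Applying the identity above, $[a_i, b_i)_{p,F'} \simeq [a_i^p, b_i)_{p,F'}$ with $a_i^p \in F$, so each factor is extended from $F$. Hence $A \otimes_F \bigl(\bigotimes_{i=2}^{n} [a_i^p, b_i)_{p,F}\bigr)^{\op}$ is split by $F' = F(\sqrt[p]{b_1})$, and your base case gives the remaining factor $[a_1, b_1)_{p,F}$.

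So the corestriction computations, projection formulas, and ``descent lemma'' you sketch are all unnecessary; the single missing ingredient is the observation $[a,b) \simeq [a^p,b)$, which collapses the slot entries from $F'$ into $F$ for free.
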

\begin{proof}
    See \cite[Theorem 7.28]{Alb39}.
\end{proof}

\section{The frobenius morphism}
Let $K/F$ be a purely inseparable field extension such that $K^p\subseteq F$. 
The map $\Frob_{K/F}: K\to F$, $x\mapsto x^p$ is called the \emph{Frobenius homomorphism}. 
It gives a $K$-algebra structure to $F$. 
More precisely, the scalar multiplication of $K$ on $F$ is given by $x\cdot a=x^pa$ for $x\in K$ and $a\in F$.
If $A$ is a $K$-algebra, then seeing $F$ as a $K$-algebra via the Frobenius map, we can form the tensor product over $K$ of these two $K$-algebras, $\Frob_{K/F}A=F\otimes_KA$, which is then an $F$-algebra, where for $\lambda\in K$, $\beta\in F$ and $a\in A$ we have $\beta\otimes\lambda a=\lambda^p\beta\otimes a$.

\begin{prop}\label{properties-Frob-alg}
    Let $K/F$ be a purely inseparable field extension with $K^p\subseteq F$. The following hold:
\begin{enumerate}[$(1)$]
    \item If $A$ and $B$ are $K$-algebras, then we have
    $$\Frob_{K/F}(A\otimes_KB)\simeq \Frob_{K/F}A\otimes_F\Frob_{K/F}B.$$
    \item For $n\in\mathbb{N}_+$, we have that $\Frob_{K/F}\mathbb{M}_n(K)\simeq\mathbb{M}_n(F)$.
    \item If $A$ is a central simple $K$-algebra, then $\Frob_{K/F}A$ is a central simple $F$-algebra.
\end{enumerate}	
\end{prop}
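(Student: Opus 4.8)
The plan is to reduce all three parts to a single observation: after relabeling scalars, $\Frob_{K/F}$ is nothing but the inclusion of a subfield of $F$. Put $L=K^p$. Since $K/F$ is purely inseparable with $K^p\subseteq F$, the subring $L$ of $F$ is a field, and the map $\psi\colon K\to L$, $x\mapsto x^p$, is a ring isomorphism (it is injective because $K$ is a field). Hence $\Frob_{K/F}\colon K\to F$ factors as $K\xrightarrow{\psi}L\hookrightarrow F$. For a $K$-algebra $A$, let ${}^{\psi}A$ denote the ring $A$ endowed with the $L$-algebra structure $\ell\cdot a=\psi^{-1}(\ell)\,a$. The first step is to check that the identity on underlying rings induces an isomorphism of $F$-algebras
\begin{equation*}
    \Frob_{K/F}A=F\otimes_KA\;\simeq\;F\otimes_L{}^{\psi}A,
\end{equation*}
which comes down to observing that the two sides impose the same relation $x^p\beta\otimes a=\beta\otimes xa$ for $x\in K$, $\beta\in F$, $a\in A$.

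Granting this identification, the three assertions follow from standard properties of the genuine field extension $L\subseteq F$. For part~(1), the ring isomorphism $\psi$ yields ${}^{\psi}(A\otimes_KB)\simeq{}^{\psi}A\otimes_L{}^{\psi}B$, and extension of scalars along $L\subseteq F$ is monoidal, so
\begin{equation*}
    F\otimes_L\bigl({}^{\psi}A\otimes_L{}^{\psi}B\bigr)\simeq\bigl(F\otimes_L{}^{\psi}A\bigr)\otimes_F\bigl(F\otimes_L{}^{\psi}B\bigr);
\end{equation*}
combining this with the identification above gives~(1). For part~(2), relabeling turns $\mathbb{M}_n(K)$ into $\mathbb{M}_n(L)$, and $F\otimes_L\mathbb{M}_n(L)\simeq\mathbb{M}_n(F)$. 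For part~(3), the isomorphism $\psi$ sends central simple $K$-algebras to central simple $L$-algebras, so ${}^{\psi}A$ is central simple over $L$; and the base change of a central simple algebra along a field extension is again central simple (see, e.g., \cite{Jacob1996}), so $F\otimes_L{}^{\psi}A$, and hence $\Frob_{K/F}A$, is central simple over $F$.

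I do not expect a genuine obstacle here: the argument is essentially bookkeeping. The only point demanding care is keeping track of the two module structures on $F$ that appear — its own field structure, through which $L=K^p$ acts, and the $K$-module structure pulled back along $\Frob_{K/F}$ — which is exactly why I would introduce the factorization through $L$ at the outset and then phrase everything in terms of the ordinary extension $L\subseteq F$.
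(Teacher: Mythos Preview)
Your argument is correct. The factorisation of $\Frob_{K/F}$ as the ring isomorphism $\psi\colon K\to L=K^p$ followed by the inclusion $L\hookrightarrow F$ is exactly the right way to see what is going on, and your identification $F\otimes_KA\simeq F\otimes_L{}^{\psi}A$ is valid: both sides are quotients of $F\otimes_{\mathbb{Z}}A$ by the same relations, since the $K$-module structure on $F$ via $x\mapsto x^p$ is, after transport along $\psi$, precisely the natural $L$-module structure. Parts~(1)--(3) then reduce, as you say, to transport of structure along $\psi$ together with the standard behaviour of central simple algebras, matrix algebras, and tensor products under extension of scalars along the honest field inclusion $L\subseteq F$.

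As for comparison: the paper does not give its own argument here but simply refers to \cite[Section~3.13, p.~149]{Jacob1996}. Your proposal therefore supplies a self-contained proof where the paper defers to the literature. The approach you take---identifying the Frobenius twist with an ordinary base change after relabelling scalars---is the standard one and is in the same spirit as Jacobson's treatment, so there is no real divergence in method, only in explicitness.
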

\begin{proof} 
    See \cite[Section 3.13, p.149]{Jacob1996}.
\end{proof}

Let $K/F$ be a purely inseparable field extension with $K^p\subseteq F$. By \Cref{properties-Frob-alg}, we obtain a group homomorphism
\begin{equation*}
    \Frob_{K/F}:\Br_p(K)\to\Br_p(F), [A]\to[\Frob_{K/F}A].
\end{equation*}
Recall further the group homomorphism $r_{K/F}:\Br_p(F)\to\Br_p(K)$ given by $r_{K/F}[A]=[A\otimes_FK]$ which is induced by the scalar extension.

\begin{prop}\label{r+frob=complex-p-torsion}
    Let $K/F$ be a purely inseparable field extension with $K^p\subseteq F$. The following hold:
\begin{enumerate}[$(1)$]
    \item For $x\in K$ and $y\in K^{\times}$, we have that $\Frob_{K/F}([x,y)_{p,K})=[x^p,y^p)_{p,F}$.
    \item The composite homomorphism $\Frob_{K/F}\circ r_{K/F}:\Br_p(F)\to\Br_p(F)$ is trivial. 
\end{enumerate}	
\end{prop}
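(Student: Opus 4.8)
The plan is to prove both parts by explicit computation with the generators and relations for cyclic algebras of degree $p$ in characteristic $p$, together with the functorial properties of $\Frob_{K/F}$ recorded in \Cref{properties-Frob-alg}.

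For part $(1)$, I would start from the presentation of $[x,y)_{p,K}$ as the $K$-algebra generated by elements $i,j$ subject to $i^p-i=x$, $j^p=y$ and $ji=(i-1)j$. Applying $\Frob_{K/F}$ means forming $F\otimes_K[x,y)_{p,K}$, where the scalar action is twisted by Frobenius: $\lambda\cdot a = \lambda^p a$ for $\lambda\in K$, $a\in F$. The elements $i'=1\otimes i$ and $j'=1\otimes j$ generate $\Frob_{K/F}[x,y)_{p,K}$ as an $F$-algebra. I would then check the relations they satisfy: the relation $ji=(i-1)j$ is $F$-linear in the right sense and passes to $j'i'=(i'-1)j'$; and the key point is $(i')^p$ and $(j')^p$. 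Since $i^p=i+x$ in $[x,y)_{p,K}$, we get $(i')^p = 1\otimes(i+x) = 1\otimes i + 1\otimes x\cdot 1 = i' + x^p(1\otimes 1)$, using that $1\otimes x\cdot 1 = x^p\otimes 1$ by the twisted scalar action; thus $(i')^p - i' = x^p$ in $\Frob_{K/F}[x,y)_{p,K}$. Similarly $(j')^p = 1\otimes y = y^p(1\otimes 1)$. Hence $i',j'$ satisfy exactly the defining relations of $[x^p,y^p)_{p,F}$, and comparing $F$-dimensions (both sides have dimension $p^2$, the left by \Cref{properties-Frob-alg}$(3)$ and the degree being preserved) gives the isomorphism $\Frob_{K/F}([x,y)_{p,K})\simeq [x^p,y^p)_{p,F}$. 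I expect the only subtle point here is bookkeeping the twisted scalar multiplication correctly when computing $(i')^p$, i.e.\ making sure the $x$ coming out of $i^p$ lands as $x^p$ on the $F$-side.

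For part $(2)$, I would argue that since $\Br_p(F)$ is generated by classes of cyclic algebras of degree $p$ by \Cref{Albert-Merkur-Suslin} (applied with $n=p=\cchar F$), and since both $r_{K/F}$ and $\Frob_{K/F}$ are group homomorphisms, it suffices to show $\Frob_{K/F}\circ r_{K/F}$ kills the class of each $[a,b)_{p,F}$ with $a\in F$, $b\in F^\times$. Now $r_{K/F}[a,b)_{p,F} = [a,b)_{p,K}$ (the same symbol viewed over $K$, since $a\in F\subseteq K$ and $b\in F^\times\subseteq K^\times$), and by part $(1)$ this maps under $\Frob_{K/F}$ to $[a^p,b^p)_{p,F}$. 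By \Cref{cyclic-alg-properties}$(1)$ we have $[a^p,b^p)_{p,F}\sim [a^p,b)_{p,F}^{\otimes p}$, and a $p$-fold tensor power of a degree-$p$ algebra is split because $\exp[a^p,b)_{p,F}$ divides $p$; alternatively, and more directly, $b^p = \N_{F(i)/F}(b)$ for any choice of $i$ (as $b\in F$ and $[F(i):F]=p$), so $[a^p,b^p)_{p,F}$ is split by \Cref{cyclic-alg-properties}$(2)$. Either way the composite is trivial on generators, hence trivial. The main obstacle, such as it is, is entirely in part $(1)$: getting the Frobenius-twisted tensor computation right; part $(2)$ is then a short formal consequence via the generation statement and the norm criterion for splitting.
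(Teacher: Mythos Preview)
Your proposal is correct and follows essentially the same approach as the paper: for part~$(1)$ you verify the relations satisfied by $1\otimes i$ and $1\otimes j$ in $\Frob_{K/F}[x,y)_{p,K}$ exactly as the paper does, and for part~$(2)$ you reduce to generators of $\Br_p(F)$ via \Cref{Albert-Merkur-Suslin} and invoke part~$(1)$ together with \Cref{cyclic-alg-properties}, just as in the paper. The only cosmetic differences are that you explicitly mention the dimension comparison to conclude the isomorphism in~$(1)$ and offer two alternative justifications for the splitting of $[a^p,b^p)_{p,F}$ in~$(2)$, whereas the paper simply cites \Cref{cyclic-alg-properties}.
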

\begin{proof}	
    $(1)$ Set $C=[x,y)_{p,K}$ with $x\in K$, $y\in K^{\times}$. 
    Let $i,j\in C\setminus K$ be such that $i^p-i=x$, $j^p=y$ and $ji=(i-1)j$. 
    Now ($1\otimes1$, $1\otimes i$, $1\otimes j$, $1\otimes ij$) is an $F$-basis for $\Frob_{K/F}C$, and we have that
\begin{equation*}
\begin{split}
    (1\otimes i)^p-1\otimes i &=1\otimes(i^p-i)=1\otimes x=x^p(1\otimes1),\\
    (1\otimes j)^p &= 1\otimes y=y^p(1\otimes1),\\
    (1\otimes j)(1\otimes i)-1\otimes j&=1\otimes(ji-j)=1\otimes ij=(1\otimes i)(1\otimes j).
\end{split}
\end{equation*}	 
    Therefore $\Frob_{K/F}C\simeq[x^p,y^p)_{p,F}$.
	
    $(2)$ Let $a\in F$ and $b\in F^{\times}$. 
    We have that $\Frob_{K/F}([a,b)_{p,F}\otimes_FK)=[a^p,b^p)_{p,F}$, which is split, by \Cref{cyclic-alg-properties}.
    Since, by \Cref{Albert-Merkur-Suslin}, $\Br_p(F)$ is generated by the classes of $F$-algebras $[a,b)_{p,F}$ with $a\in F$ and $b\in F^{\times}$, this shows the statement.
\end{proof}
The assertion $(2)$ of \Cref{r+frob=complex-p-torsion} can be also obtained by the fact that $\Frob_{F/F}$ induces multiplication by $p$ on $\Br(F)$, see \cite[Theorem 4.1.2]{Jacob1996}.

\section{Cyclic algebras of exponent $p$ in characteristic $p$}
In this section, we show that in characteristic $p$, the Brauer class of any cyclic algebra of exponent $p$ and degree $p^n$, with $n\in\mathbb{N}_+$, is given by a tensor product of $p^{n-1}$ cyclic algebras of degree $p$. 
This bound was first obtained by Mammone and Merkurjev (\Cref{MM-cyclic-p-symbol}), based on a computation of the corestriction of cyclic algebras. 
In our argument, we make use of the Frobenius morphism. 

The following is a generalization of the result \cite[Proposition 3.3]{BBL2022}.

\begin{prop}\label{C:multiquad-split-quaternion-char2}
    Let $L/F$ be a finite field extension and let $C$ be a cyclic $L$-algebra of degree $p$.
    Then there exists a purely inseparable field extension $K/F$ with $[K:F]\leq p^{[L:F]}$ such that $C_{LK}$ is split. 
    Moreover, $K/F$ can be chosen such that $K^p\subseteq F$ when $L/F$ is separable.
\end{prop}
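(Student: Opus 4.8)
The plan is to reduce everything to the problem of turning a single element into a $p$-th power after a purely inseparable base change, and to do the genuine work in the separable case. By Albert's structure theory we may write $C=[a,b)_{p,L}$ for some $a\in L$ and $b\in L^{\times}$. It then suffices to produce a purely inseparable extension $K/F$ with $[K:F]\leq p^{[L:F]}$ such that $b$ becomes a $p$-th power in $LK$: if $b=w^{p}$ with $w\in LK$, then $C_{LK}=[a,w^{p})_{p,LK}\sim[a,w)_{p,LK}^{\otimes p}$ by \Cref{cyclic-alg-properties}$(1)$, and this is split because $\exp[a,w)_{p,LK}$ divides $p$. Throughout, composita are taken inside a fixed algebraic closure of $L$.

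First I would treat the case where $L/F$ is separable, which also gives the ``moreover'' part. Since $L/F$ is separable one has $L=L^{p}F$, so if $g_{1},\dots,g_{d}$ is an $F$-basis of $L$ (with $d=[L:F]$), then $g_{1}^{p},\dots,g_{d}^{p}$ is again an $F$-basis. Writing $b=\sum_{i=1}^{d}c_{i}g_{i}^{p}$ with $c_{i}\in F$ and setting $K=F(\sqrt[p]{c_{1}},\dots,\sqrt[p]{c_{d}})$, we get $K^{p}\subseteq F$, $[K:F]\leq p^{d}$, and in $LK$ the element $w=\sum_{i}\sqrt[p]{c_{i}}\,g_{i}$ satisfies $w^{p}=\sum_{i}c_{i}g_{i}^{p}=b$ by additivity of the Frobenius. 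This construction is the main tool, and I will reuse it verbatim over other base fields.

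For general $L/F$, let $L_{s}$ be the separable closure of $F$ in $L$, put $m=[L_{s}:F]$ and $[L:L_{s}]=p^{r}$, so $[L:F]=mp^{r}$. Choose a tower $L_{s}=M_{0}\subset M_{1}\subset\dots\subset M_{r}=L$ with $M_{l}=M_{l-1}(\mu_{l})$ and $\mu_{l}^{p}\in M_{l-1}$; an easy induction gives $\mu_{l}^{p^{l}}\in L_{s}$. As $L_{s}/F$ is separable we have $L_{s}=L_{s}^{p^{l}}F$, so fixing an $F$-basis $h_{1},\dots,h_{m}$ of $L_{s}$ we may write $\mu_{l}^{p^{l}}=\sum_{i=1}^{m}\lambda_{l,i}h_{i}^{p^{l}}$ with $\lambda_{l,i}\in F$, whence $\mu_{l}=\sum_{i}\lambda_{l,i}^{1/p^{l}}h_{i}$. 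Put
\[
K_{1}=F\bigl(\{\lambda_{l,i}^{1/p^{l}}:1\leq l\leq r,\ 1\leq i\leq m\}\bigr),
\]
a purely inseparable extension of $F$ with $[K_{1}:F]\leq p^{m\sum_{l=1}^{r}l}=p^{mr(r+1)/2}$ and with $L\subseteq L_{s}K_{1}$, so that $LK_{1}=L_{s}K_{1}$ is separable over $K_{1}$ of degree at most $m$. Applying the separable case to the cyclic $LK_{1}$-algebra $C_{LK_{1}}=[a,b)_{p,LK_{1}}$, with $K_{1}$ in the role of $F$, yields a purely inseparable $K/K_{1}$ with $[K:K_{1}]\leq p^{m}$ and $C_{LK}$ split. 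Then $K/F$ is purely inseparable and
\[
[K:F]\leq p^{m}\cdot p^{mr(r+1)/2}=p^{m(r(r+1)/2+1)}\leq p^{mp^{r}}=p^{[L:F]},
\]
since $r(r+1)/2+1\leq p^{r}$ for all $r\geq0$ (the prime $p=2$ being tightest). When $r=0$ this is exactly the separable case, including $K^{p}\subseteq F$; when $m=1$ the extension $LK_{1}/K_{1}$ is trivial and the separable construction degenerates to adjoining $\sqrt[p]{b}$.

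The verifications underlying this are routine: existence of the tower (a finite purely inseparable extension is a tower of degree-$p$ steps), the inclusion $\mu_{l}^{p^{l}}\in L_{s}$, and the fact that $\{g_{i}^{p}\}$ and $\{h_{i}^{p^{l}}\}$ remain bases by separability. The one delicate point, and the main obstacle, is the degree bookkeeping in the general case: one must trivialise the inseparable part of $L$ by a purely inseparable extension of $F$ that is small enough, and this only works because separability of $L_{s}/F$ lets us extract $p^{l}$-th roots of \emph{coefficients lying in $F$} rather than of elements of $L_{s}$, together with the elementary inequality $r(r+1)/2+1\leq p^{r}$ which keeps the accumulated exponents within the bound $p^{[L:F]}$.
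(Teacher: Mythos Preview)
Your proof is correct and follows the same strategy as the paper: write $C\simeq[a,b)_{p,L}$ and construct a purely inseparable extension $K/F$ of degree at most $p^{[L:F]}$ over which $b$ becomes a $p$-th power, which splits $C_{LK}$. The only difference is that the paper delegates this construction to \cite[Lemma~2.3]{BBL2022}, whereas you carry it out explicitly---your separable argument is essentially that lemma, and your general-case reduction (absorb the inseparable part of $L$ into a purely inseparable $K_{1}/F$ using $L_{s}=L_{s}^{p^{l}}F$, then apply the separable case over $K_{1}$, with the bookkeeping inequality $r(r+1)/2+1\leq p^{r}$) gives a self-contained replacement for the cited result.
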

\begin{proof}
    We have $C\simeq[x,y)_{p,L}$ for certain $x\in L$ and $y\in L^{\times}$.
    By \cite[Lemma 2.3]{BBL2022}, there exists a purely inseparable field extension $K/F$ such that $y\in {K(y)}^{\times p}$ and $[K:F]\leq p^{[L:F]}$. 
    Then $C_{LK}$ is split by \Cref{cyclic-alg-properties}.
    When $L/F$ is separable, it  follows further by \cite[Lemma 2.3]{BBL2022}  that $K^p\subseteq F$. 
\end{proof}

For a given central simple $F$-algebra $A$, we denote by $A^{\op}$ the \emph{opposite algebra of} $A$, which is defined by endowing the additive group $A$ with the multiplication $\cdot^{\op}$ given by $a\cdot^{\op} b=b\cdot a$ for $a,b\in A$, where $\cdot$ is the multiplication in $A$.

The following result is obtained by Mammone and Moresi \cite[Théorème 4]{MamMor95} for $p=2$. 
Although their argument also makes use of the Frobenius morphism, our argument is different. 
\begin{prop}\label{multipurinsep-ind-reduce to p+cyclic}
    Let $A$ be a central simple $F$-algebra with $\exp A=p$ and $n\in\mathbb{N}_+$. 
    Let $K/F$ be a purely inseparable field extension with $K^p\subseteq F$ and \linebreak $[K:F]=p^n$. 
    Assume $A_K$ is Brauer equivalent to a cyclic $K$-division algebra of degree $p$. Then $\lambda_p(A)\leq n+p-1$.
    More precisely, $A\sim B\otimes_FB'$ for some central simple $F$-algebras $B$ and $B'$ such that $B_K$ is split and $\lambda_p(B')\leq p-1$.
\end{prop}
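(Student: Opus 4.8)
The plan is to use the Frobenius morphism to transport the splitting hypothesis from $A_K$ back to $A$, in the spirit of Proposition~\ref{r+frob=complex-p-torsion}. Since $C := A_K$ is Brauer equivalent to a cyclic $K$-division algebra of degree $p$, write $C \sim [x,y)_{p,K}$ for some $x \in K$, $y \in K^\times$. Applying $\Frob_{K/F}$ and using Proposition~\ref{r+frob=complex-p-torsion}(1) together with the fact that $\Frob_{K/F}(A_K) = \Frob_{K/F}\circ r_{K/F}(A)$ is split (Proposition~\ref{r+frob=complex-p-torsion}(2)), we get that $[x^p, y^p)_{p,F}$ is split, i.e. $y^p \in \N_{F(i_0)/F}(F(i_0)^\times)$ where $i_0^p - i_0 = x^p$. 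This is a constraint, but what I actually want is to produce a purely inseparable tower inside $K$ so that Albert's Theorem~\ref{Albert-purins->decomp-palg} applies.

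The cleaner route: choose a $p$-basis adapted to $K$. Since $K^p \subseteq F$ and $[K:F] = p^n$, we have $K = F(\sqrt[p]{b_1}, \ldots, \sqrt[p]{b_n})$ for suitable $b_1, \ldots, b_n \in F^\times$ with the $b_i$ $p$-independent over $F^p$. The idea is to first split off the contribution that becomes trivial over $K$. Consider the element $y \in K^\times$ appearing in $C \sim [x,y)_{p,K}$. I expect one can adjust $x,y$ so that $y \in F^\times$: indeed $K/F$ being purely inseparable of exponent one, $K^\times/F^\times$ is killed by $p$ on scalars but more to the point one uses that $[x,y)_{p,K}$ with $y = y_0^p \cdot y_1$, $y_0 \in K$, $y_1 \in F$, is Brauer equivalent (over $K$) to $[x,y_1)_{p,K}$ by Proposition~\ref{cyclic-alg-properties}(1) since $[x, y_0^p)_{p,K}$ is split. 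Writing $y_1$ in terms of the $b_i$ and $F$, and using that $K$ contains $\sqrt[p]{b_i}$, one reduces to $y \in F^\times$, at the cost of replacing $x$ by something still in $K$. Then, crucially, $x \in K = \sum F \cdot \sqrt[p]{b_1}^{e_1}\cdots\sqrt[p]{b_n}^{e_n}$, and using the additivity $[x + x', y)_{p,K} \sim [x,y)_{p,K} \otimes [x',y)_{p,K}$ (the Artin–Schreier analogue of Proposition~\ref{cyclic-alg-properties}(1)) together with $[c \cdot m, y)_{p,K}$ for $m$ a $p$-power monomial in the $\sqrt[p]{b_i}$, one peels off at most $n$ cyclic factors, each of which is split by one of the fields $F(\sqrt[p]{b_i})$ over $F$ hence descends — and a remaining part with entries in $F$ contributing at most $p-1$ factors via Theorem~\ref{MM-cyclic-p-symbol} or a direct elementary argument.

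More precisely, here is the structure I would follow. Step 1: normalize $C = A_K \sim [x,y)_{p,K}$ to have $y \in F^\times$ as above. Step 2: apply $\Frob_{K/F}$ to $[x,y)_{p,K}$ to obtain $[x^p, y^p)_{p,F}$; by Proposition~\ref{r+frob=complex-p-torsion}(2) this is the class of $\Frob_{K/F}\circ r_{K/F}(A)$, hence split, so $A$ itself is "invisible" to $\Frob \circ r$ — but this only says $A$ is a sum of symbols split by $K$, which is what we want to refine. Step 3: decompose $x = \sum_\nu c_\nu m_\nu$ over the monomial $F$-basis of $K$, where $c_\nu \in F$ and $m_\nu$ ranges over products $\prod \sqrt[p]{b_i}^{e_i}$. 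Using Artin–Schreier additivity in the first slot, $[x,y)_{p,K} \sim \bigotimes_\nu [c_\nu m_\nu, y)_{p,K}$ over $K$; a symbol $[c_\nu m_\nu^p, \cdot)$ type manipulation shows each factor with $m_\nu \notin F$ is split by a single $F(\sqrt[p]{b_i})$, and grouping by which $b_i$ appears one gets $n$ descended factors $B_1, \ldots, B_n$ over $F$ with $(B_\ell)_K$ split, plus the $\nu$ with $m_\nu \in F$ contributing a class with entries in $F$, call it $B'$; by Theorem~\ref{MM-cyclic-p-symbol} (case $e=1$, degree $p$, so actually $\lambda_p \le 1$, but more honestly one wants the bound $p-1$ from an elementary count) $\lambda_p(B') \le p-1$. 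Set $B = B_1 \otimes \cdots \otimes B_n$; then $A \sim B \otimes_F B'$ with $B_K$ split and $\lambda_p(B') \le p-1$, giving $\lambda_p(A) \le n + (p-1)$.

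\textbf{Main obstacle.} The delicate point is Step 3: controlling the Artin–Schreier "linearization" of the first slot $x \in K$ against a base field coefficient $y \in F^\times$, and showing that after collecting terms one genuinely gets only $n$ symbols descending to $F$ rather than something like $p^n$. The additivity $[x+x',y) \sim [x,y) \otimes [x',y)$ is standard, but a monomial $m_\nu = \prod \sqrt[p]{b_i}^{e_i}$ need not split the symbol $[c_\nu m_\nu, y)$ by a \emph{single} $F(\sqrt[p]{b_i})$ unless $e_i = 0$ for all but one $i$; handling mixed monomials requires a further reduction, presumably replacing $[c m, y)$ by $[c^p m^p, y^p)$-type images under a partial Frobenius $\Frob_{K/F(\sqrt[p]{b_1},\ldots)}$ on the intermediate layers of the tower, so that each application kills one $\sqrt[p]{b_i}$ at a time and feeds a single cyclic $F$-algebra into $B$. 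Making this induction on $n$ precise — peeling one layer $F(\sqrt[p]{b_n}) \subseteq K$ at a time, splitting off one cyclic $F$-algebra, and recursing on a central simple algebra over $F(\sqrt[p]{b_n})$ that is split by the smaller purely inseparable extension $K/F(\sqrt[p]{b_n})$ of degree $p^{n-1}$ — is where the real work lies; the base case $n=0$ is exactly the hypothesis that $A_K$ is cyclic of degree $p$ combined with Theorem~\ref{Albert-purins->decomp-palg}-style descent, and it should deliver the $p-1$ term.
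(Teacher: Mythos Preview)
Your proposal has a genuine gap in Step~1 that undermines the rest of the argument. You claim one can normalize to $y\in F^{\times}$ by writing $y=y_0^p\cdot y_1$ with $y_0\in K$ and $y_1\in F$. But since $K^p\subseteq F$, any such $y_0^p$ already lies in $F$, so the decomposition forces $y\in F$ from the start; it is not a reduction but a tautology. In fact the slot that \emph{can} be normalized cheaply is the first one: because $\wp(i^p)=(i^p-i)^p=x^p$, one has $[x,y)_{p,K}\simeq[x^p,y)_{p,K}$ with $x^p\in F$. The hard part is precisely handling $y\in K\setminus F$, and your Artin--Schreier linearization in Step~3 then has nothing to act on (the first slot is already a single element of $F$, and the second slot is multiplicative, not additive). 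Even if one tried to expand $y$ multiplicatively in the monomial basis, there is no analogue of additivity on that side, and your own obstacle paragraph correctly flags that mixed monomials produce $p^n$ terms, not $n$. The proposed layer-by-layer induction does not rescue this: passing to $F'=F(\sqrt[p]{b_n})$ and applying the inductive hypothesis yields cyclic $F'$-algebras, and descending each of those back to $F$ costs a factor of $p$ in symbol count (cf.~Proposition~\ref{behavior-symbol-length-pur-insep-deg-p}), destroying the bound.

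The paper's proof agrees with your opening move (apply $\Frob_{K/F}$ to obtain that $[x^p,y^p)_{p,F}$ is split) but then goes in a different direction. Splitness means $y^p=\N_{L/F}(z')$ for some $z'$ in the cyclic degree-$p$ extension $L=F(i^p)$. If $z'\in F$ one gets $y\in F$ directly and the easy bound $n+1$. Otherwise $L=F(z')$, and the key idea is to adjoin to $K$ the $p$-th roots of the \emph{nonconstant} coefficients $c_1,\ldots,c_{p-1}$ of the minimal polynomial of $z'$ over $F$; over the resulting field $M$ one shows $z'=z^p$ for some $z\in ML$, so $y=\N_{ML/M}(z)$ and $A_M$ is split. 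Since $M=F(\sqrt[p]{b_1},\ldots,\sqrt[p]{b_n},\sqrt[p]{c_1},\ldots,\sqrt[p]{c_{p-1}})$, Albert's theorem gives $A\sim B\otimes_F B'$ with $B$ built from the $b_i$ (hence split over $K$) and $B'$ from the $c_i$ (hence $\lambda_p(B')\leq p-1$). The $p-1$ extra symbols thus arise not from any monomial count but from the coefficients of a minimal polynomial of degree $p$.
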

\begin{proof} 
    We have $A_K\sim[x,y)_{p,K}$ for some $x\in K$ and $y\in K^{\times}$. 
    Let $i,j\in [x,y)_{p,K}$ be such that $i^p-i=x$, $j^p=y$ and $ji=(i-1)j$. Set $L'=K(i)$. 
    Then, by \cite[Lemma 7.7]{Alb39}, we have that $L'=KL$ with $L=F(i^p)$ and $L/F$ is cyclic of degree $p$.
    It follows by \Cref{r+frob=complex-p-torsion} that $\Frob_{K/F}[x,y)_{p,K}=[x^p,y^p)_{p,F}$ and that $\Frob_{K/F}(A_K)$ is split.  
    Since $\Frob_{K/F}(A_K)\sim\Frob_{K/F}[x,y)_{p,K}$, we get that $[x^p,y^p)_{p,F}$ is split.
    Clearly $L$ can be embedded in $[x^p,y^p)_{p,F}$.
    Hence, we have by \Cref{cyclic-alg-properties} that $y^p=\N_{L/F}(z')$ for some $z'\in L$. 
	
    Assume first that $z'\in F$. Then, we get that $y^p=\N_{L/F}(z')=z'^p$ and therefore $y=z'$ as $\cchar F=p$. 
    Then $A_K\sim[x,z')_{p,K}\simeq[x^p,z')_{p,K}$ and hence $A\otimes_F[x^p,z')^{\op}_{p,F}$ is split over $K$.
    Since $K^p\subseteq F$, it follows by \Cref{Albert-purins->decomp-palg}, that $A\otimes_F[x^p,z')^{\op}_{p,F}\sim C$ where $C$ is a tensor product of $n$ cyclic $F$-algebras of degree $p$. 
    Then $A\sim C\otimes_F[x^p,z')_{p,F}$ and $C$ splits over $K$.
	
    Assume now that $z'\notin F$. As $z'\in L$ and $[L:F]=p$, this implies that $L=F(z')$. 
    Let $$g(X)=X^p+c_{p-1}X^{p-1}+\ldots+c_1X+c_0\in F[X]$$ be the minimal polynomial of $z'$ over $F$. 
    Then $c_0=-\N_{L/F}(z')=(-y)^p$. Set $M=K(\sqrt[p]{c_{1}},\ldots,\sqrt[p]{c_{p-1}})$. 
    Then $M/K$ is a purely inseparable field extension with $M^p\subseteq F$ and $[M:K]\leq p^{p-1}$. 
    We claim that $z'=z^p$ for some $z\in ML$. 
    Since $K/F$ is purely inseparable and $L=F(z')$, we have $KL=K(z')$ and $g$ is the minimal polynomial of $z'$ over $K$. 
    Now the polynomial 
\begin{equation*}
    g'(X)=X^p+\sqrt[p]{c_{p-1}}X^{p-1}+\ldots+\sqrt[p]{c_1}X-y\in M[X]
\end{equation*}    
    satisfies $g'(\sqrt[p]{z'})=0$. Hence $[M(\sqrt[p]{z'}):M]\leq p$. 
    Clearly $ML=M(z')\subseteq M(\sqrt[p]{z'})$. Since $[ML:M]=p$, we deduce that $ML=M(z')=M(\sqrt[p]{z'})$.
    Hence $z'=z^p$ with $z\in ML$ as claimed. Now $M/F$ is purely inseparable, hence $\N_{L/F}(z')=\N_{ML/M}(z')$. Thus we get that
\begin{equation*}
    y^p=\N_{L/F}(z')=\N_{ML/M}(z^p)=\N_{ML/M}(z)^p
\end{equation*}     
    and as $\cchar F=p$, we obtain $y=\N_{ML/M}(z)$. 
    Since $A_{M}\sim[x,y)_{p,M}$ and $y=\N_{ML/M}(z)$ with $z\in ML$, it follows by \Cref{cyclic-alg-properties} that $A_{M}$ is split. 
    If $b_1,\ldots,b_n\in F^{\times}$ are such that $K=F(\sqrt[p]{b_1},\ldots,\sqrt[p]{b_n})$, then we have 
    $M=F(\sqrt[p]{b_1},\ldots,\sqrt[p]{b_n},\sqrt[p]{c_1},\ldots,\sqrt[p]{c_{p-1}})$.
    As $A_M$ is split, it follows by \Cref{Albert-purins->decomp-palg} that $A\sim B\otimes_FB'$ where $B=\bigotimes_{i=1}^n[a_i,b_i)_{p,F}$ and 
    $B'=\bigotimes_{i=1}^{p-1}[e_i,c_i)_{p,F}$ for some $a_1,\ldots,a_n,e_1,\ldots,e_{p-1}\in F$. 
    Clearly $B$ splits over $K$ and this concludes the proof. 
\end{proof}

\begin{cor}\label{symbol-cyclic-deg=p^2-exp=p-char=p}
    Let $C$ be a cyclic $F$-algebra with $\exp C=p$ and $\deg C=p^2$. Then $\lambda_p(C)\leq p$.
\end{cor}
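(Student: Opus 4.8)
The goal is to bound $\lambda_p(C)$ for a cyclic algebra $C$ of exponent $p$ and degree $p^2$, and the natural strategy is to reduce to \Cref{multipurinsep-ind-reduce to p+cyclic} by producing a purely inseparable splitting field $K/F$ of degree $p$ with $K^p \subseteq F$ over which $C$ becomes Brauer equivalent to a cyclic algebra of degree $p$. The bound obtained from that proposition would be $n + p - 1 = 1 + p - 1 = p$, which is exactly what we want.

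First I would write $C \simeq [L/F, \sigma, b)$ where $L/F$ is cyclic of degree $p^2$ and $b \in F^\times$. Inside $L$ there is a unique intermediate field $L_1$ with $[L_1 : F] = p$ and $[L : L_1] = p$, and $L/L_1$ is cyclic of degree $p$; let $\tau = \sigma^p$ generate $\Gal(L/L_1)$. The key structural fact I would invoke is that $[L/F,\sigma,b)$, when scalar-extended to $L_1$, becomes the cyclic $L_1$-algebra $[L/L_1, \tau, b)$ tensored with a matrix algebra — more precisely, $C_{L_1} \sim [L/L_1, \tau, b)_{p,L_1}$, a cyclic $L_1$-algebra of degree $p$ (this is the standard behaviour of cyclic algebras under passing to a subfield of the cyclic maximal subfield; it follows from the description of $C$ as $\bigoplus j^i L$ together with $j^p = b \in F^\times \subseteq L_1^\times$). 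So over $L_1$, the algebra $C$ is Brauer equivalent to a cyclic algebra of degree $p$.

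Next I want a purely inseparable field extension $K/F$ with $K^p \subseteq F$, $[K:F] = p$, such that $C_K$ is still Brauer equivalent to a cyclic $K$-algebra of degree $p$ — that is, I want to "replace" the separable degree-$p$ extension $L_1/F$ by a purely inseparable one of the same degree. This is where I would apply \Cref{C:multiquad-split-quaternion-char2} or its underlying Lemma: writing $C_{L_1} \sim [x,y)_{p,L_1}$, there is a purely inseparable $K/F$ with $[K:F] \le p^{[L_1:F]} = p^p$ splitting off $y$; but that bound is too weak. Instead I would argue directly: since $C$ is split by the separable extension $L_1/F$ of degree $p$, and $\exp C = p$, I want a purely inseparable degree-$p$ analogue. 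One clean route: $C$ has index dividing $p^2$; if $\ind C = p$ then $C \sim [L/F,\sigma,b)$ with a degree-$p$ maximal subfield and \Cref{Albert-Tignol-Rowen-2-symbol-small degree}-type reasoning or directly \Cref{MM-cyclic-p-symbol} with $n=1$ gives $\lambda_p(C) \le 1 \le p$. So assume $\ind C = p^2$, i.e. $C$ is a division algebra. Then over $L_1$, $C_{L_1}$ is a division algebra of degree $p$ (index $p$, since $[L_1:F]=p$ and $C$ has index $p^2$). Apply \Cref{multipurinsep-ind-reduce to p+cyclic} — but that requires a purely inseparable $K$, not the separable $L_1$.

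The cleaner reduction: apply \Cref{C:multiquad-split-quaternion-char2} with $L = F$ and the cyclic algebra... no. Let me instead use the following. By \Cref{Albert-purins->decomp-palg} applied appropriately, or more directly: since $C_{L_1}$ is cyclic of degree $p$, write $C_{L_1} \sim [x,y)_{p,L_1}$, and by \cite[Lemma 2.3]{BBL2022} there is a purely inseparable $K_0/L_1$ with $y \in K_0(y)^{\times p}$ and $[K_0 : L_1] \le p$, so $C_{L_1 K_0}$ is split. Then $K_0$ contains a purely inseparable-over-$F$ piece... this still has the $p^{[L_1:F]}$ issue. I expect the \textbf{main obstacle} to be precisely getting a purely inseparable splitting field of degree exactly $p$ (not $p^p$) with $K^p \subseteq F$, so that \Cref{multipurinsep-ind-reduce to p+cyclic} applies with $n = 1$. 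The resolution I would pursue is: use that $C_{L_1}$ cyclic of degree $p$ means $C_{L_1} \simeq [a', b')_{p, L_1}$ with $b' = (j')^p$, and since $C$ itself is $[L/F,\sigma,b)$ one can take $b' = b \in F$; then $C_{L_1} \simeq [a', b)_{p,L_1}$ for some $a' \in L_1$, and by Albert's theorem over $L_1$ the purely inseparable extension $L_1(\sqrt[p]{b})$ splits $C_{L_1}$. Now $\sqrt[p]{b} \in \overline{F}$ with $(\sqrt[p]{b})^p = b \in F$: set $K = F(\sqrt[p]{b})$, a purely inseparable extension of $F$ with $K^p \subseteq F$ and $[K:F] \le p$. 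If $[K:F] = 1$ then $b \in F^{\times p}$ and $C \sim [L/F,\sigma,b)$ is already split (as $j \in F^\times$... rather $b = c^p$ forces $C$ split), giving $\lambda_p(C) = 0$. If $[K:F] = p$, then over $K$ we have $b \in K^{\times p}$, so $C_K = [LK/K, \cdot, b)$ with $b$ a $p$-th power is split — wait, that would give $C$ split by a purely inseparable extension of degree $p$, hence $C_K$ is already split, and then \Cref{Albert-purins->decomp-palg} with $n=1$ gives $C \sim [a_1, b)_{p,F}$, so $\lambda_p(C) \le 1 \le p$. That can't be right in general since cyclic division algebras of degree $p^2$ exist; the issue is that $C_{L_1} \simeq [a',b)_{p,L_1}$ need not have $b'$ equal to $b$. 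So the correct statement is $C_{L_1} \sim [x, y)_{p, L_1}$ for some $y \in L_1^\times$ genuinely in $L_1$, and I must apply \Cref{C:multiquad-split-quaternion-char2} with $L = L_1$: there is a purely inseparable $K/F$, $[K:F] \le p^{[L_1:F]} = p^p$, $K^p \subseteq F$, with $C_{L_1 K}$ split. This gives that $C$ is split by the purely inseparable extension $K$ of degree $\le p^p$... again too weak.

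Given this tension, I believe the intended argument is simply: apply \Cref{multipurinsep-ind-reduce to p+cyclic} after first using \Cref{C:multiquad-split-quaternion-char2} cleverly, OR — most likely — the corollary follows by an entirely different short route: take $K/F$ purely inseparable of degree $p$ with $K^p \subseteq F$ such that $\ind C_K = p$ (such $K$ exists because $C$, being split by the separable extension $L_1$ of degree $p$, has a "purely inseparable partner" of degree $p$ — concretely, $C \otimes_F F^{1/p}$ has index dividing $p$ by a standard argument using that $\ind(C_{L_1}) \mid p$ and index is multiplicative-ish, or by $\Frob$). Then $C_K$ has index $p$, hence is Brauer equivalent to a cyclic $K$-algebra of degree $p$ (every index-$p$ exponent-$p$ algebra in char $p$ is cyclic, by \Cref{Albert-Merkur-Suslin}), and \Cref{multipurinsep-ind-reduce to p+cyclic} with $n = 1$ yields $\lambda_p(C) \le 1 + p - 1 = p$. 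The hard part is justifying the existence of such a $K$ of degree exactly $p$; I would do this via the Frobenius morphism: $\Frob_{F^{1/p}/F}$ and the fact that $C_{F^{1/p}}$ relates to $\Frob$ of $C$, combined with $C$ being split by $L_1/F$ separable of degree $p$, to conclude $\ind(C \otimes_F F^{1/p}) \mid p$, then descend to a degree-$p$ subextension.
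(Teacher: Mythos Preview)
You correctly identify the strategy: apply \Cref{multipurinsep-ind-reduce to p+cyclic} with $n=1$, which requires a purely inseparable extension $K/F$ of degree $p$ with $K^p\subseteq F$ such that $C_K$ is Brauer equivalent to a cyclic $K$-algebra of degree $p$. Your endgame is right, and the bound $1+(p-1)=p$ follows immediately once such a $K$ is in hand.

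The genuine gap is your construction of $K$. You cycle through the separable intermediate field $L_1$, \Cref{C:multiquad-split-quaternion-char2} (which only gives $[K:F]\leq p^p$), the extension $F(\sqrt[p]{b})$ (where you correctly catch your own error), and finally a vague Frobenius/$F^{1/p}$ argument --- none of which produces a degree-$p$ purely inseparable $K$ with a clean justification. The paper's proof bypasses all of this by using the cyclic structure of $C$ directly: write $C=[L/F,\sigma,b)$ with $j^{p^2}=b$, and take $K=F(j^p)$ \emph{as a subfield of $C$}. Then $K/F$ is purely inseparable of degree $p$ (if $C$ is division; otherwise $\ind C\leq p$ and $\lambda_p(C)\leq 1$ trivially), and the centralizer $C_C(K)$ is a central simple $K$-algebra of degree $p$, Brauer equivalent to $C_K$, and cyclic (it contains the cyclic maximal subfield $KL_1$ and the element $j$ with $j^p\in K$). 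This is exactly the observation used more generally in the proof of \Cref{symbol-length-cyclic-exp=p-char=p}: a cyclic algebra of degree $p^n$ in characteristic $p$ contains a purely inseparable subfield of degree $p^{n-1}$, namely $F(j^p)$. You had all the pieces --- you even wrote down $C_{L_1}\sim[L/L_1,\tau,b)$ --- but looked for $K$ outside $C$ rather than inside it.
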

\begin{proof}
    As $C$ is cyclic, we can find a purely inseparable field extension $K/F$ of degree $p$ such that $C_K$ is Brauer equivalent to a cyclic $K$-algebra of degree $p$. 
    Now the statement follows directly by \Cref{multipurinsep-ind-reduce to p+cyclic}.
\end{proof}

\begin{rem}
    The bounds in \Cref{symbol-cyclic-deg=p^2-exp=p-char=p} are optimal for $p=2$ and $p=3$.
    The case $p=2$ follows by \Cref{Albert-Tignol-Rowen-2-symbol-small degree}.
    Karpenko's construction in \Cref{generic-indecomp-Karpenko} yields optimality for the case $p=3$. 
    Indeed, if we start with a cyclic algebra $D$ over a field of characteristic $3$ (e.g. a global field of characteristic $3$) with $\deg D=\exp D=9$, then $D(9,3)$ is a cyclic algebra of degree $9$ and exponent $3$ and it does not decompose into a tensor product of two cyclic algebras of degree $3$. 
    This shows the optimality for the case $p=3$. 
\end{rem}

\begin{qu}
    Are the bounds in \Cref{symbol-cyclic-deg=p^2-exp=p-char=p} optimal for $p\geq5$? 
\end{qu}

Mammone and Merkurjev, using the corestriction argument of Tignol from \cite{Tignol1983}, showed that any cyclic $F$-algebra of degree $p^n$ and exponent $p^e$, with $n,e\in\mathbb{N}$, is Brauer equivalent to a tensor product of $p^{n-e}$ cyclic $F$-algebras of degree $p^e$. 
Hence, we retrieve their bounds for $n=2$ and $e=1$. 
To generalize our result to cyclic algebras of exponent $p$ and $p$-power degree, we will first make the following two observations.

\begin{prop}\label{behavior-symbol-length-pur-insep-deg-p}
    Let $A$ be a central simple $F$-algebra with $\exp A=p$. Let $K/F$ be an inseparable field extension with $[K:F]=p$.
    Let $n\in\mathbb{N}_+$ and assume that $\lambda_p(A_K)\leq n$. Then $\lambda_p(A)\leq n\cdot p$.
\end{prop}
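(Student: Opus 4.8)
The strategy is to pull back a decomposition of $A_K$ into cyclic algebras of degree $p$ over $K$ via the Frobenius morphism, using the fact that $K^p \subseteq F$. First I would observe that since $K/F$ is inseparable of degree $p$ and $\cchar F = p$, we have $K = F(\sqrt[p]{d})$ for some $d \in F^{\times}$, and in particular $K^p \subseteq F$, so the Frobenius homomorphism $\Frob_{K/F}$ and the associated group homomorphism $\Frob_{K/F}\colon \Br_p(K) \to \Br_p(F)$ of \Cref{properties-Frob-alg} are available. By hypothesis $A_K \sim \bigotimes_{i=1}^{n} [x_i, y_i)_{p,K}$ for some $x_i \in K$ and $y_i \in K^{\times}$. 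Applying $\Frob_{K/F}$ and using \Cref{properties-Frob-alg}(1) together with \Cref{r+frob=complex-p-torsion}(1), we get
\begin{equation*}
    \Frob_{K/F}(A_K) \sim \bigotimes_{i=1}^{n} [x_i^p, y_i^p)_{p,F}.
\end{equation*}
This already exhibits $\Frob_{K/F}(A_K)$ as a tensor product of $n$ cyclic $F$-algebras of degree $p$, so $\lambda_p(\Frob_{K/F}(A_K)) \leq n$.

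Next I would relate $\Frob_{K/F}(A_K)$ back to $A$ itself. The key identity is that $\Frob_{K/F} \circ r_{K/F}$ is \emph{not} trivial here in the naive sense one might hope — rather, one should compute $\Frob_{K/F}(A_K) = \Frob_{K/F}(A \otimes_F K)$ directly. By \Cref{properties-Frob-alg}, $\Frob_{K/F}$ is multiplicative, and $F$ as a $K$-algebra via Frobenius is just $K$ itself with scalars twisted; concretely $\Frob_{K/F}(A \otimes_F K) \simeq \Frob_{K/F}(A_K)$ should be computed as $F \otimes_K (A \otimes_F K)$. One shows this is Brauer equivalent to $A^{\otimes p}$: indeed, for any central simple $F$-algebra $A$, the composite $\Frob_{K/F} \circ r_{K/F}$ on $\Br_p(F)$ equals multiplication by $p$ (as noted in the excerpt after \Cref{r+frob=complex-p-torsion}, via \cite[Theorem 4.1.2]{Jacob1996}, using $[K:F]=p$). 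Since $\exp A = p$, we have $A^{\otimes p} \sim \mathbb{M}_{p^{\deg A}}(F)$, i.e. $\Frob_{K/F}(A_K)$ is split — so that approach gives nothing directly. The correct maneuver is instead: write $K = F(\sqrt[p]{d})$ and note $A_K$ being decomposed as above means $A \otimes_F [a, d)_{p,F}^{\otimes ?}$ ... Let me reconsider. The clean path is the one used in \Cref{multipurinsep-ind-reduce to p+cyclic}: by \Cref{Albert-purins->decomp-palg}, since $K = F(\sqrt[p]{d})$ splits $A \otimes_F [a_0, d)_{p,F}^{\op}$ for a suitable $a_0$ — wait, that requires $A_K$ split, not merely decomposed.

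So the honest plan is the following. Because $K/F$ is purely inseparable of degree $p$, \Cref{Albert-purins->decomp-palg} is not directly applicable to $A$ (which need not be split by $K$), but it \emph{is} applicable after we absorb the cyclic factors. Concretely: the decomposition $A_K \sim \bigotimes_{i=1}^n [x_i,y_i)_{p,K}$ together with \Cref{r+frob=complex-p-torsion}(1) gives $\Frob_{K/F}(A_K) \sim \bigotimes_{i=1}^n [x_i^p, y_i^p)_{p,F} =: C$. Now $r_{K/F}(C) \sim (A_K)^{\otimes p} \sim$ split since $\exp A_K \mid p$; hmm. Instead, the argument I would actually run: choose a counit. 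Observe $\Frob_{K/F}(r_{K/F}(A)) \sim$ (multiplication by $p$)$(A) \sim A^{\otimes p}$, and separately $\Frob_{K/F}(r_{K/F}(A)) = \Frob_{K/F}(A_K) \sim C$. Hence $C \sim A^{\otimes p}$ in $\Br(F)$. But $A^{\otimes p}$ is split, so this tells us $C$ is split, which is consistent but unhelpful for bounding $\lambda_p(A)$. The resolution must be that the statement is proven by a \emph{different} pairing: one uses that $A$ and $A_K$ differ by a class killed by $K$, apply \Cref{Albert-purins->decomp-palg} to get $A \sim A' \otimes_F [a_0,d)_{p,F}$ where $A'_K \sim A_K$ and... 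I would set this up carefully so that the final count is $\lambda_p(A) \leq n \cdot p$: the factor $p$ arises because replacing each degree-$p$ cyclic factor $[x_i,y_i)_{p,K}$ over $K$ by its "corestriction-type" image over $F$ produces, in the worst case, $p$ cyclic factors of degree $p$ over $F$ (via \Cref{MM-cyclic-p-symbol} or \Cref{symbol-cyclic-deg=p^2-exp=p-char=p} applied to the corestriction $\corr_{K/F}$, which lands a degree-$p$ cyclic $K$-algebra into a degree-$p^2$ ... no, degree $p^{[K:F]} = p^p$; that overcounts). The cleanest correct version: use \Cref{C:multiquad-split-quaternion-char2} with $L = K$ to split each factor $[x_i,y_i)_{p,K}$ by a purely inseparable extension of $F$ of degree $\le p^p$, then \Cref{Albert-purins->decomp-palg} — but this gives a bound $p$, not $p^p$, per factor only after more care.

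The main obstacle, and the step I expect to occupy most of the work, is precisely this bookkeeping: converting "$A_K$ is a product of $n$ degree-$p$ symbols over $K$" into "$A$ is a product of $\le np$ degree-$p$ symbols over $F$", controlling the multiplicity $p$. I expect the right argument to go: by \Cref{cyclic-alg-properties} and the structure of $K = F(\sqrt[p]{d})$, each $[x_i,y_i)_{p,K}$ is split by a purely inseparable extension $K_i/F$ with $K_i^p \subseteq F$ and $[K_i : K] \le p^{p-1}$ (hence $[K_i : F] \le p^p$) via \Cref{C:multiquad-split-quaternion-char2}; combining, $A$ itself is split by the compositum $\tilde K = K_1 \cdots K_n$, which is purely inseparable over $F$ with $\tilde K^p \subseteq F$ — but its degree is far too large, so \Cref{Albert-purins->decomp-palg} alone yields a bad bound. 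Therefore the intended proof surely processes one factor at a time inductively using \Cref{multipurinsep-ind-reduce to p+cyclic} or its corollary \Cref{symbol-cyclic-deg=p^2-exp=p-char=p}: peel off one degree-$p$ symbol from $A_K$, lift it to a degree-$p^2$ cyclic (or just degree-$p$) algebra over $F$ whose scalar extension to $K$ matches it, use $\lambda_p \le p$ for such liftings, and recurse on the remainder, accumulating $n$ times a contribution of $\le p$. I would organize the induction on $n$: the base case $n=1$ is \Cref{symbol-cyclic-deg=p^2-exp=p-char=p}-style (a central simple $F$-algebra whose scalar extension to a degree-$p$ inseparable $K$ is a single degree-$p$ symbol has $\lambda_p \le p$), and the inductive step splits off one symbol over $K$, descends it to $F$ at cost $\le p$, and applies the hypothesis to the rest at cost $\le (n-1)p$, totalling $\le np$.
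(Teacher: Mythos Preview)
Your exploration correctly identifies that the direct Frobenius route fails (since $\Frob_{K/F}\circ r_{K/F}$ is trivial on $\Br_p(F)$), and your base case $n=1$ is indeed \Cref{multipurinsep-ind-reduce to p+cyclic} with $[K:F]=p$. The gap is in the inductive step: ``descending'' a single factor $[x_1,y_1)_{p,K}$ to an $F$-algebra $B_1$ with $(B_1)_K\sim[x_1,y_1)_{p,K}$ presupposes that this Brauer class lies in the image of $r_{K/F}\colon\Br_p(F)\to\Br_p(K)$. For a purely inseparable degree-$p$ extension this restriction map is \emph{not} surjective in general; with $K=F(u)$, $u^p\in F\setminus F^p$, a class such as $[a,u)_{p,K}$ for generic $a\in F$ typically admits no lift. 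Without a lift you cannot form $A\otimes_F B_1^{\op}$ over $F$ and invoke the inductive hypothesis on it. Replacing the lift by ``split one factor via a small purely inseparable extension of $K$'' does not help either: adjoining $\sqrt[p]{y_1}$ gives a field whose $p$-th powers lie only in $K$, not in $F$, so neither \Cref{Albert-purins->decomp-palg} nor \Cref{multipurinsep-ind-reduce to p+cyclic} applies over $F$.

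The paper avoids lifting altogether. After disposing of the easy case where every $y_i\in F^\times$, one takes some $y=y_1\notin F$, so that $K=F(y)$, and writes $y_i=\sum_{j=0}^{p-1}c_{i,j}\,y^{j}$ with $c_{i,j}\in F$ for $i\geq 2$. Setting $M=K(\sqrt[p]{c_{i,j}}:2\leq i\leq n,\ 0\leq j\leq p-1)$, the crucial point is that the $c_{i,j}$ already lie in $F$, so $M^p\subseteq F$ and $[M:F]\leq p^{(n-1)p+1}$. Over $M(\sqrt[p]{y})$ every $y_i$ becomes a $p$-th power, hence $A$ is split there and $\lambda_p(A_M)\leq 1$. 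A single application of \Cref{multipurinsep-ind-reduce to p+cyclic} then yields $\lambda_p(A)\leq (n-1)p+1+(p-1)=np$. The idea you are missing is to use one of the second slots $y_1$ as the generator of $K/F$, so that the data needed to trivialize the remaining slots consists of elements of $F$ rather than of $K$.
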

\begin{proof}
    Note that, since $[K:F]=p$, we have that $K^p\subseteq F$. 
    By our assumption, we have that $A_K\sim\bigotimes_{i=1}^{n}[x_i,y_i)_{p,K}$ for some $x_1,\ldots,x_n\in K$, $y_1,\ldots,y_n\in K^{\times}$.
    If $y_i\in F^{\times}$ for all $i=1,\ldots,n$, then as $[x_i,y_i)_{p,K}\simeq[x_i^p,y_i)_{p,K}$, it follows that $A\otimes_F\bigotimes_{i=1}^{n}[x_i^p,y_i)^{\op}_{p,F}$ splits over $K$, and we obtain that $\lambda_p(A)\leq n+1<n\cdot p$.

    Assume now, without loss of generality, that $y_1\notin F$. Set $y=y_1$.  
    As $[K:F]=p$, we get that $K=F(y)$. For $i=2,\ldots,n$, we write $y_i=\sum_{j=0}^{p-1} c_{i,j}y^j$ with $c_{i,0},\ldots,c_{i,p-1}\in F$. We let 
\begin{equation*}
    M=K(\sqrt[p]{c_{2,0}},\ldots,\sqrt[p]{c_{2,p-1}},\ldots,\sqrt[p]{c_{n,0}},\ldots,\sqrt[p]{c_{n,p-1}}).
\end{equation*}    
    Note that $M/F$ is a purely inseparable field extension with  $M^p\subseteq F$ and we have $[M:F]\leq p^{(n-1)\cdot p+1}$.
    Then $A$ splits over $M(\sqrt[p]{y})$ and therefore $\lambda_p(A_M)\leq1$. 
    Now it follows by \Cref{multipurinsep-ind-reduce to p+cyclic} that $\lambda_p(A)\leq (n-1)\cdot p+1+p-1=n\cdot p$.
\end{proof}

\begin{cor}\label{behavior-symbol-length-pur-insep}
    Let $A$ be a central simple $F$-algebra with $\exp A=p$. 
    Let $K/F$ be a purely inseparable field extension and assume that $A_K$ is not split. Then $\lambda_p(A)\leq[K:F]\cdot\lambda_p(A_K)$.
\end{cor}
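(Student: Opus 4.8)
The plan is to induct on $[K:F]$, which is a power of $p$, say $p^n$. The base case $n=0$ is trivial (then $K=F$ and there is nothing to prove, or if one prefers $n=1$, it is exactly \Cref{behavior-symbol-length-pur-insep-deg-p}). For the inductive step, write the purely inseparable extension $K/F$ as a tower $F\subseteq K_0\subseteq K$ where $[K:K_0]=p$; such an intermediate field exists for any purely inseparable extension of degree $p^n$ with $n\geq 1$, since one can pick any element $t\in K\setminus F$ and refine a chain. Actually, to make the inductive bookkeeping clean, I would instead choose $K_0$ with $[K_0:F]=p$ and $[K:K_0]=p^{n-1}$.

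First I would apply the inductive hypothesis over the base field $K_0$ to the algebra $A_{K_0}$: since $A_K=(A_{K_0})_K$ is not split and $K/K_0$ is purely inseparable of degree $p^{n-1}$, we get $\lambda_p(A_{K_0})\leq [K:K_0]\cdot\lambda_p(A_K)=p^{n-1}\cdot\lambda_p(A_K)$. (One subtlety: the inductive hypothesis requires $A_{K_0}$ non-split, but this is automatic — if $A_{K_0}$ were split then $A_K$ would be split, contradicting the hypothesis.) Then I would apply \Cref{behavior-symbol-length-pur-insep-deg-p} to the inseparable degree-$p$ extension $K_0/F$: since $\lambda_p(A_{K_0})\leq p^{n-1}\cdot\lambda_p(A_K)=:m$, we conclude $\lambda_p(A)\leq m\cdot p=p^n\cdot\lambda_p(A_K)=[K:F]\cdot\lambda_p(A_K)$, which is the desired bound.

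The one point requiring a little care is the existence of the intermediate field $K_0$ with $[K_0:F]=p$: for a purely inseparable extension $K/F$ of exponent possibly larger than $1$, one uses that $K/F$ is generated by finitely many elements each of $p$-power exponent, and a standard argument produces a tower of degree-$p$ steps; then $K_0$ can be taken as the bottom step. Alternatively, since every subextension of a purely inseparable extension is purely inseparable and such extensions always admit a degree-$p$ subextension when nontrivial, the existence is immediate. I expect this existence statement to be the only mild obstacle; everything else is a direct two-step combination of \Cref{behavior-symbol-length-pur-insep-deg-p} with the inductive hypothesis, and the multiplicativity of the bound matches the multiplicativity of the degree $[K:F]=[K:K_0]\cdot[K_0:F]$ perfectly.
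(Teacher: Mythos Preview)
Your proposal is correct and follows essentially the same argument as the paper: induct on $[K:F]$, pick an intermediate field $K_0$ with $[K_0:F]=p$, apply the inductive hypothesis to $K/K_0$, and then apply \Cref{behavior-symbol-length-pur-insep-deg-p} to descend from $K_0$ to $F$. Your observation that the inductive hypothesis requires $A_{K_0}$ to be non-split (which the paper leaves implicit) is a worthwhile clarification, and your discussion of the existence of $K_0$ is more careful than the paper's one-line assertion.
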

\begin{proof}
    We prove the statement by induction on $[K:F]$, the case $[K:F]=1$ being trivially true. Assume now that $[K:F]>1$. 
    Note that, as $K/F$ is a purely inseparable field extension, we have that $[K:F]$ is a $p$-power. 
    Now, we can find a subfield $K'/F$ of $K/F$ with $[K':F]=p$.  
    It follows by induction hypothesis that $\lambda_p(A_{K'})\leq[K:K']\cdot\lambda_p(A_K)$.
    Then, using \Cref{behavior-symbol-length-pur-insep-deg-p}, we obtain that $\lambda_p(A)\leq p\cdot\lambda_p(A_{K'})\leq p\cdot[K:K']\cdot\lambda_p(A_K)=[K:F]\cdot\lambda_p(A_K)$.
\end{proof}

As a consequence, we obtain bounds for the symbol length of cyclic algebras of exponent $p$. 
These bounds coincide with those in \Cref{MM-cyclic-p-symbol}.
\begin{thm}\label{symbol-length-cyclic-exp=p-char=p}
    Let $C$ be a cyclic $F$-algebra with $\exp C=p$ and $\deg C=p^n$ with $n\in\mathbb{N}_+$. 
    Then $\lambda_p(C)\leq p^{n-1}$.
\end{thm}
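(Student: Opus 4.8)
The plan is to induct on $n$. The case $n=1$ is immediate: then $C$ is itself a cyclic algebra of degree $p$, so $\lambda_p(C)\leq 1=p^{0}$. For $n\geq 2$ we write $C\simeq[L/F,\sigma,b)$ with $L/F$ cyclic of degree $p^{n}$ and $b\in F^{\times}$. The strategy is to produce a purely inseparable extension of $F$ of degree at most $p$ over which $C$ becomes Brauer equivalent to a cyclic algebra of degree $p^{n-1}$, and then to descend back to $F$ using \Cref{behavior-symbol-length-pur-insep-deg-p}, which costs a factor of $p$; together with the induction hypothesis this gives $\lambda_p(C)\leq p\cdot p^{n-2}=p^{n-1}$.

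The mechanism for lowering the degree rests on a single norm computation. Let $E=L^{\langle\sigma^{p^{n-1}}\rangle}$ be the unique subfield of $L/F$ with $[L:E]=p$, so that $E/F$ is cyclic of degree $p^{n-1}$. By the standard restriction formula for cyclic algebras one has $C_{E}\simeq\mathbb{M}_{p^{n-1}}\bigl([L/E,\sigma^{p^{n-1}},b)\bigr)$, and by \Cref{cyclic-alg-properties} the factor $[L/E,\sigma^{p^{n-1}},b)$ is split whenever $b\in\N_{L/E}(L^{\times})$. Suppose first that $b\in F^{\times p}$, say $b=\beta^{p}$ with $\beta\in F$. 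Since $\beta\in F\subseteq E$ and $[L:E]=p$ we get $\N_{L/E}(\beta)=\beta^{p}=b$, so $E$ already splits $C$. As $E/F$ is cyclic, it is classical that a central simple algebra split by a cyclic extension is Brauer equivalent to a cyclic algebra over that extension (see \cite{Alb39}); hence $C$ is Brauer equivalent to a cyclic $F$-algebra of degree $p^{n-1}$, and the induction hypothesis gives $\lambda_p(C)\leq p^{n-2}\leq p^{n-1}$.

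Suppose now that $b\notin F^{\times p}$, and set $K=F(b^{1/p})$; this is a purely inseparable extension of $F$ with $[K:F]=p$ and $K^{p}\subseteq F$. As $L/F$ is separable, $LK/K$ is again cyclic of degree $p^{n}$, and applying the computation above over $K$ to $\beta=b^{1/p}\in K$ and the subfield $E'$ of $LK/K$ with $[LK:E']=p$ shows that $E'$ splits $C_{K}$; since $E'/K$ is cyclic of degree $p^{n-1}$, it follows that $C_{K}$ is Brauer equivalent to a cyclic $K$-algebra of degree $p^{n-1}$. If $C_{K}$ is split, then $C$ is split by $K=F(b^{1/p})$, and \Cref{Albert-purins->decomp-palg} gives $\lambda_p(C)\leq 1$; otherwise the induction hypothesis applied over $K$ gives $\lambda_p(C_{K})\leq p^{n-2}$, and \Cref{behavior-symbol-length-pur-insep-deg-p} then gives $\lambda_p(C)\leq p\cdot p^{n-2}=p^{n-1}$. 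The step I expect to require the most care is precisely this degree reduction: making sure the restriction formula for $C_{E}$ is available in exactly the needed form, and, above all, passing cleanly from ``$C_{K}$ is split by a cyclic extension of degree $p^{n-1}$'' to ``$C_{K}$ is Brauer equivalent to a cyclic algebra of degree $p^{n-1}$'', so that the induction hypothesis genuinely applies. By contrast the descent from the purely inseparable degree-$p$ extension $K$ down to $F$ is routine, being exactly what \Cref{behavior-symbol-length-pur-insep-deg-p} is built for.
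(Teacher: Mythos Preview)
Your proof is correct. The paper's proof takes a slightly different and more direct route: it observes that a cyclic algebra $C=[L/F,\sigma,b)$ of degree $p^n$ contains the purely inseparable subfield $K=F[j^{p}]$ (where $j^{p^n}=b$), of degree $p^{n-1}$ over $F$; the centralizer $C_{C}(K)$ is then a cyclic $K$-algebra of degree $p$ Brauer equivalent to $C_{K}$, and a single application of \Cref{behavior-symbol-length-pur-insep} gives $\lambda_p(C)\leq p^{n-1}\cdot\lambda_p(C_K)=p^{n-1}$. So the paper jumps directly to a purely inseparable extension of degree $p^{n-1}$ and invokes the corollary once, whereas you peel off one factor of $p$ at a time via $K=F(b^{1/p})$, reduce to a cyclic algebra of degree $p^{n-1}$, and iterate using \Cref{behavior-symbol-length-pur-insep-deg-p} inside an induction. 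Since \Cref{behavior-symbol-length-pur-insep} is itself proved by exactly this kind of iteration of \Cref{behavior-symbol-length-pur-insep-deg-p}, the two arguments run on the same engine; your version trades the (slightly delicate) claim that $K=F[j^p]$ is a \emph{field} of degree $p^{n-1}$ for the standard restriction formula $C_E\sim[L/E,\sigma^{p^{n-1}},b)$ together with the fact that ``split by a cyclic extension implies Brauer equivalent to a cyclic algebra''. Note that the paper's assertion about $K$ tacitly assumes $b\notin F^{\times p}$ (otherwise $F[j^p]$ is not a field), which is precisely the case distinction you make explicit.
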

\begin{proof}
    As $C$ is cyclic, we can find a purely inseparable field extension $K/F$, with $[K:F]=p^{n-1}$, contained in $C$.
    Then $C_{C}(K)$ is a cyclic $K$-algebra of degree $p$, which is Brauer equivalent to $C_{K}$. 
    It follows by \Cref{behavior-symbol-length-pur-insep} that $\lambda_p(C)\leq p^{n-1}\cdot \lambda_p(C_K)=p^{n-1}$.
\end{proof}

\section{Algebras of exponent $p$ in characteristic $p$} 
In this section, we show that in characteristic $2$, the Brauer class of a central simple algebra of exponent $2$ and index $2^n$, with $n\in\mathbb{N}_+$, is given by a tensor product of $2^n-1$ quaternion algebras. 
This bound was first obtained by Florence using a generic splitting field given by the function field of the Severi-Brauer variety attached to the algebra (\Cref{Flo-p-symbol}). 
We study purely inseparable splitting fields and make use of the Frobenius morphism.

To capture an essential hypothesis for the method that we are going to present, we introduce a condition on the field $F$, which might be trivially satisfied, depending on the prime $p$ given by the characteristic of $F$.
We call the field $F$ \emph{$p$-reducible} (respectively \emph{$p$-cyclic reducible}) if, for every central simple $F$-algebra $A$ of exponent $p$ and every purely inseparable field extension $K/F$ with $K^p\subseteq F$, either $A_K$ is split or
there exists a separable field extension $L/F$ of degree $\frac{1}{p}\ind A_K$ such that $\ind A_{LK}=p$ (respectively $A_{LK}$ is Brauer equivalent to a cyclic $LK$-algebra of degree $p$).
Note that these two notions coincide for prime numbers $p$ where central division algebras of degree $p$ are cyclic, in particular for $p=2$ and $p=3$ \cite[Theorem 11.5]{Alb39}. 

\begin{exmp}
    If $p=2$, then $F$ is $2$-cyclic reducible. 
    Indeed, $F$ is $2$-reducible by \cite[Lemma 4]{Bec16} and any central division algebra of degree $2$ is cyclic.
\end{exmp}  
We don't know whether fields of characteristic $p>2$ are generally $p$-cyclic reducible, or even $p$-reducible.

\begin{lem}\label{process-for-exist-multipurins splitting-p-alg-exp=p}
    Assume that $F$ is $p$-cyclic reducible. Let $A$ be a central simple $F$-algebra with $\exp A=p$. 
    Let $K/F$ be a finite purely inseparable field extension with $K^p\subseteq F$ such that $\ind A_K=p^n$ with $n\in\mathbb{N}_+$. 
    There exists a purely inseparable field extension $K'/K$ with $K'^p\subseteq F$ and $[K':K]\leq p^{(p-1)\cdot p^{n-1}}$ such that $\ind A_{K'}\leq p^{n-1}$.
\end{lem}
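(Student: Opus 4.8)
The plan is to invoke the $p$-cyclic reducibility hypothesis to produce a *separable* extension that drops the index, and then to convert this separable extension into a purely inseparable one using the Frobenius machinery, while carefully tracking the degree bound. First I would apply the definition of $p$-cyclic reducibility to the algebra $A_K$ (which has exponent dividing $p$, since $\exp A = p$) together with the purely inseparable extension $K/F$ with $K^p \subseteq F$: since $A_K$ is not split (as $\ind A_K = p^n$ with $n \geq 1$), there is a separable field extension $L/F$ with $[L:F] = \frac{1}{p}\ind A_K = p^{n-1}$ such that $A_{LK}$ is Brauer equivalent to a cyclic $LK$-algebra $C$ of degree $p$. The goal is now to replace the separable $L$ by a purely inseparable extension of $K$ that still kills the ``cyclic of degree $p$'' part.

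Next I would apply \Cref{C:multiquad-split-quaternion-char2} to the cyclic algebra $C$ over the base field $LK$, viewing $LK$ as a finite extension of $K$: this gives a purely inseparable field extension $K'/K$ with $[K':K] \leq p^{[LK:K]}$ such that $C_{LKK'} = C \otimes_{LK} (LK \cdot K')$ is split. Since $C \sim A_{LK}$, the algebra $A_{LKK'}$ is then split, so in particular $\ind A_{K'} \mid [LK : K]$ — but I need the sharper statement $\ind A_{K'} \leq p^{n-1}$. For this I would use that $[LK:K] \leq [L:F] = p^{n-1}$ (scalar extension of a separable extension cannot increase its degree, with equality when $L$ and $K$ are linearly disjoint over $F$, which holds here since one is separable and the other purely inseparable). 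Combining, $A_{K'}$ is split by $LK \cdot K'$, an extension of $K'$ of degree at most $p^{n-1}$, so $\ind A_{K'} \mid p^{n-1}$, giving $\ind A_{K'} \leq p^{n-1}$ as required.

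Two points still need attention: the degree bound $[K':K] \leq p^{(p-1)\cdot p^{n-1}}$ and the condition $K'^p \subseteq K^p \subseteq F$ wait—more precisely $K'^p \subseteq F$. For the degree, \Cref{C:multiquad-split-quaternion-char2} applied over $LK$ gives $[K':K] \leq p^{[LK:K]} \leq p^{p^{n-1}}$, which is already much smaller than the claimed $p^{(p-1)\cdot p^{n-1}}$ for $p \geq 2$ (the generous bound in the statement presumably anticipates a cruder construction, perhaps splitting $C$ via an explicit Albert-type purely inseparable extension built from the $p$ structure constants of a presentation $C \simeq [x,y)_{p,LK}$, each contributing a factor $p^{\leq [LK:K]}$, and only $p-1$ of them needing a $p$-th root since $y$ can be absorbed — in any case the bound we obtain is stronger than required, so this is harmless). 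For the condition $K'^p \subseteq F$: \Cref{C:multiquad-split-quaternion-char2} guarantees $K'^p \subseteq K$ in general, but states that $K'/K$ can be taken with $K'^p \subseteq K$ — here I need $K'^p \subseteq F$; since $K^p \subseteq F$ is given but $K \not\subseteq F$ in general, I should instead invoke the ``moreover'' clause of \Cref{C:multiquad-split-quaternion-char2}: the extension $LK/K$ is separable (as $L/F$ is separable), so $K'/K$ can be chosen with $K'^p \subseteq K$, hence $K'^p \subseteq K$ and then $(K'^p)^p = K'^{p^2}$... this does not immediately give $K'^p \subseteq F$. The cleaner route is to realize $K' = K(\sqrt[p]{d_1}, \dots, \sqrt[p]{d_m})$ with the $d_i$ lying in $LK$; to force $K'^p \subseteq F$ one replaces the $d_i$ by suitable elements of $F$: since $LK/K$ is separable and $K/F$ is purely inseparable with $K^p \subseteq F$, one can arrange (as in the proof of \Cref{multipurinsep-ind-reduce to p+cyclic}, where the structure constants are taken in $F$ after expanding in a basis) that the radicals are taken over elements of $F$, at the cost of at most $[LK:K] \cdot [\text{number of generators}]$ many $p$-th roots, still within the stated bound.

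The main obstacle I expect is precisely this last bookkeeping: ensuring simultaneously that (i) $K'/K$ is purely inseparable with $K'^p \subseteq F$ — not merely $K'^p \subseteq K$ — and (ii) the degree stays within $p^{(p-1)\cdot p^{n-1}}$, while the splitting extension $K'$ naturally arises over the separable base $LK$ rather than over $F$. The resolution is to follow the pattern already used in the proof of \Cref{multipurinsep-ind-reduce to p+cyclic}: write the cyclic algebra $C \sim [x,y)_{p,LK}$, use that $y^p$ equals a norm from a degree-$p$ subextension, pull everything down via the minimal polynomial whose coefficients lie in $K$ (hence have $p$-th powers in $F$ after a bounded purely inseparable adjustment), and adjoin the $p-1$ relevant $p$-th roots of these $F$-elements — this gives exactly the $(p-1)$ and the exponent $p^{n-1} = [LK:K]$ appearing in the claimed bound $p^{(p-1)\cdot p^{n-1}}$, and automatically yields $K'^p \subseteq F$ since all radicands are chosen in $F$.
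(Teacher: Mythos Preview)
Your overall strategy is right, and you correctly identify the crucial sticking point: the naive application of \Cref{C:multiquad-split-quaternion-char2} to the cyclic algebra $C$ over $LK/K$ only yields $K'^p\subseteq K$, not $K'^p\subseteq F$. However, your proposed resolution does not actually close this gap.

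When you ``follow the pattern of \Cref{multipurinsep-ind-reduce to p+cyclic}'', the correct base for that argument is $L$, with purely inseparable extension $LK/L$ (since $(LK)^p\subseteq L$) and algebra $A_L$. The minimal polynomial of the norm element $z'$ is then taken over $L$, so its coefficients $c_1,\dots,c_{p-1}$ lie in $L$, \emph{not} in $K$ as you write. Adjoining $\sqrt[p]{c_i}$ therefore produces an extension whose $p$-th powers lie in $L$, and there is no reason these radicands can be taken in $F$; so your final sentence ``automatically yields $K'^p\subseteq F$ since all radicands are chosen in $F$'' is unjustified.

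The missing idea is a two-step descent. First apply \Cref{multipurinsep-ind-reduce to p+cyclic} itself (not just its pattern) over the base $L$: this gives $A_L\sim B\otimes_L B'$ with $B_{LK}$ split and $B'$ a tensor product of $p-1$ cyclic \emph{$L$-algebras} of degree $p$. The point is that $B'$ now lives over $L$, and $L/F$ is \emph{separable} of degree $p^{n-1}$. Second, apply \Cref{C:multiquad-split-quaternion-char2} to each of the $p-1$ cyclic factors of $B'$, this time over the base $F$: the ``moreover'' clause now applies and furnishes $b_1,\dots,b_m\in F^\times$ with $m\leq(p-1)\cdot p^{n-1}$ such that $L(\sqrt[p]{b_1},\dots,\sqrt[p]{b_m})$ splits $B'$. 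Setting $K'=K(\sqrt[p]{b_1},\dots,\sqrt[p]{b_m})$ then gives $K'^p\subseteq F$ (since all $b_i\in F$ and $K^p\subseteq F$), and $LK'$ splits both $B$ and $B'$, hence $A_{LK'}$ is split and $\ind A_{K'}\leq[LK':K']=p^{n-1}$. This is the paper's argument, and it explains why the bound is $p^{(p-1)\cdot p^{n-1}}$ rather than the $p^{p^{n-1}}$ your direct approach would have given.
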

\begin{proof}
    By our assumption, there exists a separable field extension $L/F$ with $[L:F]=p^{n-1}$ such that $A_{LK}$ is Brauer equivalent to a cyclic $LK$-algebra of degree $p$. 
    It follows by \Cref{multipurinsep-ind-reduce to p+cyclic} that $A_L\sim B\otimes_LB'$ for some central simple $L$-algebras $B$ and $B'$ such that $B$ splits over $LK$ and $B'$ is a tensor product of $(p-1)$ cyclic $L$-algebras of degree $p$.
    By \Cref{C:multiquad-split-quaternion-char2}, there exist $b_1,\ldots,b_m\in F^{\times}\setminus F^{\times p}$ with $m\leq (p-1)\cdot p^{n-1}$ such that $L(\sqrt[p]{b_1},\ldots,\sqrt[p]{b_m})$ splits $B'$. 
    Consider $K'=K(\sqrt[p]{b_1},\ldots,\sqrt[p]{b_m})$. 
    Then $K'/F$ is a purely inseparable field extension with $K'^{p}\subseteq F$ and $[K':K]\leq p^{(p-1)\cdot p^{n-1}}$. 
    Now $LK'$ splits $B'$ and $B$, and hence $B\otimes_LB'$.
    Therefore $A_{LK'}$ is split. Since $[LK':K']=[L:F]=p^{n-1}$, we obtain that $\ind A_{K'}\leq p^{n-1}$.
\end{proof}

\begin{prop}\label{purins of exp=p-reducing-ind-palg-exp=p}
    Assume that $F$ is $p$-cyclic reducible.
    Let $A$ be a central simple $F$-algebra with $\exp A=p$ and $\ind A=p^n$ with $n\in\mathbb{N}_+$. 
    Then, for $1\leq i\leq n$, there exist purely inseparable field extensions $K_i/F$ with $K_i^p\subseteq F$ and $K_{i-1}\subseteq K_i$ such that $\ind A_{K_i}\leq p^{n-i}$ and that $[K_i:F]\leq p^{2\cdot p^{n-1}-p^{n-i}}$.
\end{prop}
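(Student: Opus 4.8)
The plan is to prove the statement by induction on $i$, with $K_0:=F$; throughout I use that the index of a central simple algebra of exponent dividing $p$ is a power of $p$ and divides the degree of any finite splitting field extension. For the base case $i=1$: since $\ind A=p^n$ with $n\geq 1$, the algebra $A$ is not split, so applying the hypothesis that $F$ is $p$-cyclic reducible to $A$ together with the trivial purely inseparable extension $F/F$ yields a separable field extension $L/F$ with $[L:F]=\tfrac{1}{p}\ind A=p^{n-1}$ such that $A_L$ is Brauer equivalent to a cyclic $L$-algebra $C$ of degree $p$. As $L/F$ is separable, \Cref{C:multiquad-split-quaternion-char2} provides a purely inseparable field extension $K_1/F$ with $K_1^p\subseteq F$ and $[K_1:F]\leq p^{[L:F]}=p^{p^{n-1}}$ such that $C_{LK_1}$, and hence $A_{LK_1}$, is split. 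Since $LK_1$ then splits $A_{K_1}$ and $[LK_1:K_1]=[L:F]=p^{n-1}$, we obtain $\ind A_{K_1}\leq p^{n-1}$; moreover $K_0=F\subseteq K_1$ and $[K_1:F]\leq p^{p^{n-1}}=p^{2p^{n-1}-p^{n-1}}$, as required.

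For the inductive step, let $2\leq i\leq n$ and assume $K_{i-1}$ has been constructed with $K_{i-1}^p\subseteq F$, $\ind A_{K_{i-1}}\leq p^{n-i+1}$, and $[K_{i-1}:F]\leq p^{2p^{n-1}-p^{n-i+1}}$. If $\ind A_{K_{i-1}}\leq p^{n-i}$, I simply set $K_i=K_{i-1}$ and note that $2p^{n-1}-p^{n-i+1}\leq 2p^{n-1}-p^{n-i}$. Otherwise $\ind A_{K_{i-1}}=p^{n-i+1}$ with $n-i+1\in\mathbb{N}_+$, so in particular $\exp A_{K_{i-1}}=p$, and I apply \Cref{process-for-exist-multipurins splitting-p-alg-exp=p} to $A$ with $K=K_{i-1}$ (its index parameter being $n-i+1$). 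This produces a purely inseparable field extension $K_i/K_{i-1}$ with $K_i^p\subseteq F$, $[K_i:K_{i-1}]\leq p^{(p-1)p^{n-i}}$, and $\ind A_{K_i}\leq p^{n-i}$. Multiplying degrees and using $(p-1)p^{n-i}-p^{n-i+1}=-p^{n-i}$ gives
\[
[K_i:F]=[K_i:K_{i-1}]\,[K_{i-1}:F]\leq p^{(p-1)p^{n-i}}\cdot p^{2p^{n-1}-p^{n-i+1}}=p^{2p^{n-1}-p^{n-i}},
\]
which, together with $K_{i-1}\subseteq K_i$, completes the induction.

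I expect the one genuinely delicate point to be the base case: one should not just feed $K=F$ into \Cref{process-for-exist-multipurins splitting-p-alg-exp=p}, because that lemma would only give $[K_1:F]\leq p^{(p-1)p^{n-1}}$, which exceeds the required bound $p^{p^{n-1}}$ as soon as $p>2$. The saving is that over $F$ the algebra $A_L$ is directly Brauer equivalent to a \emph{single} cyclic $L$-algebra of degree $p$, so a single application of \Cref{C:multiquad-split-quaternion-char2} suffices and one avoids the factor $p-1$ that would come from splitting a tensor product of $p-1$ symbols. Once the base case is in place, the inductive step is routine bookkeeping of field degrees, the only substantive ingredient being the exponent identity in the display above.
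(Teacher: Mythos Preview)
Your proof is correct and follows essentially the same route as the paper: establish $K_1$ directly via $p$-cyclic reducibility with $K=F$ and \Cref{C:multiquad-split-quaternion-char2}, then iterate \Cref{process-for-exist-multipurins splitting-p-alg-exp=p} for $i\geq 2$, keeping track of the telescoping exponent $(p-1)p^{n-i}-p^{n-i+1}=-p^{n-i}$. Your write-up is in fact slightly more careful than the paper's, since you explicitly handle the case where $\ind A_{K_{i-1}}$ has already dropped below $p^{n-i+1}$ by taking $K_i=K_{i-1}$, and you spell out why the base case cannot simply be absorbed into \Cref{process-for-exist-multipurins splitting-p-alg-exp=p}.
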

\begin{proof}
    By our assumption, there exists a separable field extension $L/F$ such that $[L:F]=p^{n-1}$ and that $A_{L}$ is Brauer equivalent to a cyclic $L$-algebra of degree $p$. 
    By \Cref{C:multiquad-split-quaternion-char2}, we can find a purely inseparable field extension $K_1/F$ with $K_1^p\subseteq F$ and $[K_1:F]\leq p^{p^{n-1}}$ such that $A_{LK_1}$ is split. In particular, $\ind A_{K_1}\leq p^{n-1}$.
    Then, a repeated use of \Cref{process-for-exist-multipurins splitting-p-alg-exp=p}, starting by applying to $A_{K_1}$, yields purely inseparable field extensions $K_i/F$ with $K_{i-1}\subseteq K_i$, $K_i^p\subseteq F$ and $[K_i:K_{i-1}]\leq p^{(p-1)\cdot p^{n-i}}$ such that $\ind A_{K_i}\leq p^{n-i}$ for $2\leq i\leq n$. 
    We have 
\begin{equation*}
    [K_i:F]\leq p^{p^{n-1}}\cdot p^{(p-1)\cdot p^{n-2}}\cdots p^{(p-1)\cdot p^{n-i}}=p^{2\cdot p^{n-1}-p^{n-i}}
\end{equation*}      
    for $2\leq i\leq n$.
\end{proof}

\begin{prop}\label{symbol-length-exp p-char p-under assumptions}
    Assume that $F$ is $p$-cyclic reducible.
    Let $A$ be a central simple $F$-algebra with $\exp A=p$ and $\ind A=p^n$ with $n\in\mathbb{N}_+$. 
    Then $\lambda_p(A)\leq2\cdot p^{n-1}-1$.
\end{prop}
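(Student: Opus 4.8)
The plan is to feed $A$ into \Cref{purins of exp=p-reducing-ind-palg-exp=p} and then invoke Albert's decomposition theorem \Cref{Albert-purins->decomp-palg}. Concretely, applying \Cref{purins of exp=p-reducing-ind-palg-exp=p} with $i=n$ produces a purely inseparable field extension $K=K_n$ of $F$ with $K^p\subseteq F$ and $\ind A_{K}\leq p^{n-n}=1$, so that $A_K$ is split, together with the degree bound $[K:F]\leq p^{2\cdot p^{n-1}-p^{0}}=p^{2\cdot p^{n-1}-1}$.

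Next I would translate this degree bound into a bound on the number of radical generators of $K$ over $F$. Since $K^p\subseteq F$, the extension $K/F$ is purely inseparable of exponent at most one, hence $[K:F]=p^m$ for some $m\in\mathbb{N}$, and one can pick $b_1,\ldots,b_m\in F^{\times}$ with $K=F(\sqrt[p]{b_1},\ldots,\sqrt[p]{b_m})$ (for instance by taking a $p$-basis of $K$ over $F$). The inequality $[K:F]\leq p^{2\cdot p^{n-1}-1}$ then forces $m\leq 2\cdot p^{n-1}-1$. Alternatively, one can simply track the radical generators adjoined in the proof of \Cref{purins of exp=p-reducing-ind-palg-exp=p}: there $K_1$ is obtained from $F$ by adjoining at most $p^{n-1}$ independent $p$-th roots, and passing from $K_{i-1}$ to $K_i$ adds at most $(p-1)\cdot p^{n-i}$ further ones, for a total of at most $p^{n-1}+\sum_{i=2}^{n}(p-1)p^{n-i}=2\cdot p^{n-1}-1$.

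Finally, since the purely inseparable extension $K=F(\sqrt[p]{b_1},\ldots,\sqrt[p]{b_m})$ splits $A$, \Cref{Albert-purins->decomp-palg} yields $a_1,\ldots,a_m\in F$ with
$$A\sim[a_1,b_1)_{p,F}\otimes_F\cdots\otimes_F[a_m,b_m)_{p,F},$$
and therefore $\lambda_p(A)\leq m\leq 2\cdot p^{n-1}-1$, which is the claim.

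As for the main obstacle: there is essentially none left once \Cref{purins of exp=p-reducing-ind-palg-exp=p} is available, since all of the work has been front-loaded into that proposition and its supporting lemmas (notably \Cref{multipurinsep-ind-reduce to p+cyclic} and \Cref{C:multiquad-split-quaternion-char2}). The only point deserving a moment's care is the bookkeeping that makes the exponent $2\cdot p^{n-1}-1$ appearing in the degree estimate coincide with the number of symbols, i.e.\ that a purely inseparable extension $K/F$ with $K^p\subseteq F$ and $[K:F]=p^m$ is generated by exactly $m$ independent $p$-th roots; this is standard, but it is precisely what lets the degree bound pass directly to the symbol-length bound without any loss.
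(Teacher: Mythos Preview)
Your proposal is correct and matches the paper's own proof essentially line for line: apply \Cref{purins of exp=p-reducing-ind-palg-exp=p} with $i=n$ to obtain a purely inseparable splitting field $K/F$ with $K^p\subseteq F$ and $[K:F]\leq p^{2\cdot p^{n-1}-1}$, then invoke \Cref{Albert-purins->decomp-palg}. The extra paragraph you include, explaining why such a $K$ is generated by at most $2\cdot p^{n-1}-1$ independent $p$-th roots, is a detail the paper leaves implicit but which is exactly the right justification to supply.
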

\begin{proof}
    \Cref{purins of exp=p-reducing-ind-palg-exp=p} yields a purely inseparable field extension $K/F$ with $K^p\subseteq F$ and $[K:F]\leq p^{2\cdot p^{n-1}-1}$ such that $A_K$ is split. 
    Now the statement follows by \Cref{Albert-purins->decomp-palg}. 
\end{proof}

\begin{cor}\label{3-symbol-for-3-red-fields}
    Let $p=3$ and assume that $F$ is $3$-reducible. 
    Let $A$ be a central simple $F$-algebra with $\exp A=3$ and $\ind A=3^n$ with $n\in\mathbb{N}_+$.
    Then $\lambda_3(A)\leq2\cdot3^{n-1}-1$.
\end{cor}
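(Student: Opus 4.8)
The plan is to reduce this corollary directly to \Cref{symbol-length-exp p-char p-under assumptions} by observing that, for the prime $p=3$, the hypotheses ``$3$-reducible'' and ``$3$-cyclic reducible'' are in fact the same condition on $F$. The key input is Wedderburn's classical theorem that every central division algebra of degree $3$ is cyclic (this is \cite[Theorem 11.5]{Alb39}, and is already invoked in the paragraph introducing the $p$-reducibility notions).

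Concretely, I would argue as follows. Let $A$ be a central simple $F$-algebra with $\exp A = 3$, and let $K/F$ be any purely inseparable field extension with $K^3 \subseteq F$. Suppose $A_K$ is not split and that $L/F$ is a separable field extension of degree $\tfrac{1}{3}\ind A_K$ with $\ind A_{LK} = 3$ — that is, the witness to $3$-reducibility. Since $A_{LK}$ is a central simple $LK$-algebra of index $3$, it is Brauer equivalent to a central division $LK$-algebra of degree $3$, which by \cite[Theorem 11.5]{Alb39} is cyclic. Hence $A_{LK}$ is Brauer equivalent to a cyclic $LK$-algebra of degree $3$, which is exactly the witness required for $3$-cyclic reducibility. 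Therefore $F$ is $3$-cyclic reducible.

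Once this equivalence is in hand, the corollary is immediate: apply \Cref{symbol-length-exp p-char p-under assumptions} with $p=3$ to the algebra $A$ with $\exp A = 3$ and $\ind A = 3^n$, obtaining $\lambda_3(A) \leq 2\cdot 3^{n-1}-1$.

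I do not anticipate a genuine obstacle here, since this is a formal consequence of a named result together with the degree-$3$ cyclicity theorem; the only thing to be careful about is to phrase the implication in the correct direction — namely that $3$-reducibility \emph{implies} $3$-cyclic reducibility (the converse being trivial), so that the stronger-looking hypothesis of \Cref{symbol-length-exp p-char p-under assumptions} is actually available. A one- or two-sentence proof in the paper citing \cite[Theorem 11.5]{Alb39} and \Cref{symbol-length-exp p-char p-under assumptions} should suffice.
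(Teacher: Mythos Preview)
Your proposal is correct and matches the paper's own proof exactly: the paper's argument is the one-line observation that every division algebra of degree $3$ is cyclic, so $3$-reducible implies $3$-cyclic reducible, and then \Cref{symbol-length-exp p-char p-under assumptions} applies. Your more detailed unpacking of why $3$-reducibility yields $3$-cyclic reducibility is a faithful expansion of that single sentence.
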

\begin{proof}
    Since any division algebra of degree $3$ is cyclic, the statement follows by \Cref{symbol-length-exp p-char p-under assumptions}.
\end{proof}

Let us close this section with our result on the $2$-symbol length in characteristic $2$. 
The bounds coincide with the ones in \Cref{Flo-p-symbol}
\begin{thm}\label{2-symbol-length-char=2}
    Let $p=2$. Let $A$ be a central simple $F$-algebra with $\exp A=2$ and $\ind A=2^n$ with $n\in\mathbb{N}_+$. Then $\lambda_2(A)\leq2^{n}-1$.
\end{thm}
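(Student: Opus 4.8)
The plan is to obtain this as an immediate consequence of \Cref{symbol-length-exp p-char p-under assumptions}. That proposition already gives the bound $\lambda_p(A)\leq 2\cdot p^{n-1}-1$ for any central simple $F$-algebra of exponent $p$ and index $p^n$, \emph{provided} $F$ is $p$-cyclic reducible. So the only point to settle is that this hypothesis is automatic when $\cchar F=2$. This is precisely the content of the example recorded just after the definition of $p$-reducibility: by \cite[Lemma 4]{Bec16} every field of characteristic $2$ is $2$-reducible, and since a central division algebra of degree $2$ is a quaternion algebra and hence cyclic, the notions of $2$-reducible and $2$-cyclic reducible coincide over such a field.

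Granting this, apply \Cref{symbol-length-exp p-char p-under assumptions} with $p=2$: it yields $\lambda_2(A)\leq 2\cdot 2^{n-1}-1$. Since $2\cdot 2^{n-1}-1=2^{n}-1$, this is exactly the claimed bound, which recovers \Cref{Flo-p-symbol} in the case $p=2$, $e=1$.

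There is no real obstacle at the level of this theorem: all the work has been done upstream. For completeness one could instead run the argument directly, unwinding \Cref{purins of exp=p-reducing-ind-palg-exp=p}: starting from $\ind A=2^n$, one builds a chain of purely inseparable extensions $K_i/F$ with $K_i^2\subseteq F$, $K_{i-1}\subseteq K_i$ and $\ind A_{K_i}\leq 2^{n-i}$, using $2$-reducibility together with \Cref{C:multiquad-split-quaternion-char2} and \Cref{multipurinsep-ind-reduce to p+cyclic} at each step, so that $K:=K_n$ splits $A$ with $[K:F]\leq 2^{2\cdot 2^{n-1}-1}=2^{2^{n}-1}$; then \Cref{Albert-purins->decomp-palg} presents $A$ as a tensor product of $2^{n}-1$ quaternion algebras. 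The genuinely substantive ingredients — the purely inseparable splitting-field construction and the use of the Frobenius morphism — sit in those earlier results, and the present statement is just the specialization $p=2$ where the reducibility hypothesis can be discharged.
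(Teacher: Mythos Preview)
Your proof is correct and matches the paper's own argument exactly: the paper simply notes that $F$ is $2$-cyclic reducible (by the example you cite) and then invokes \Cref{symbol-length-exp p-char p-under assumptions} with $p=2$. Your additional unwinding paragraph is accurate but goes beyond what the paper records for this theorem.
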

\begin{proof}
    Since $F$ is $2$-cyclic reducible, the statement follows by \Cref{symbol-length-exp p-char p-under assumptions}.
\end{proof}

\section{Splitting by $p$-extensions}\label{section:Galois-p-elemantary-split}
In this section, we address \Cref{p-symbol-p-elemantary-crossed-product?}.
We provide bounds on the $p$-symbol length under the extra assumption that the considered algebra has a splitting field which is a $p$-extension.
Let us start with the following observation.
\begin{prop}\label{behavior-p-symbol-length-cyclic-ext-deg=p}
    Let $A$ be a central simple $F$-algebra with $\exp A=p$. Let $L/F$ be a cyclic field extension with $[L:F]=p$. 
    Let $n\in\mathbb{N}$ and assume that $\lambda_p(A_L)\leq n$. Then $\lambda_p(A)\leq (n+1)\cdot p-1$.
\end{prop}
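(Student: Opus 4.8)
The plan is to use a purely inseparable scalar extension to trivialize the ``$y$-slots'' of the cyclic factors describing $A_L$, pushing the problem into the scope of \Cref{multipurinsep-ind-reduce to p+cyclic}.

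First I would write $L=F(\theta)$ with $\theta^p-\theta=a\in F$ and, using the hypothesis $\lambda_p(A_L)\le n$, fix a presentation $A_L\sim\bigotimes_{i=1}^{n}[x_i,y_i)_{p,L}$ with $x_i\in L$ and $y_i\in L^{\times}$ (allowing split factors; the case where $A_L$ is split, in particular $n=0$, is immediate, since then $L/F$ already splits $A$ and $\ind A\mid p$, giving $\lambda_p(A)\le1\le(n+1)p-1$). The crucial point is that $\{1,\theta^p,\theta^{2p},\dots,\theta^{(p-1)p}\}$ is again an $F$-basis of $L$: the transition matrix from $\{1,\theta,\dots,\theta^{p-1}\}$ is upper triangular with $1$'s on the diagonal, because $\theta^{jp}=(\theta+a)^{j}$ is monic of degree $j$ in $\theta$. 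So I would expand $y_i=\sum_{j=0}^{p-1}e_{i,j}\theta^{jp}$ with $e_{i,j}\in F$ and set
$$M=F\bigl(\sqrt[p]{e_{i,j}}\ :\ 1\le i\le n,\ 0\le j\le p-1\bigr),$$
a purely inseparable extension with $M^p\subseteq F$ and $[M:F]\le p^{np}$. Over the compositum $ML$, the identity $(u+v)^p=u^p+v^p$ in characteristic $p$ gives
$$y_i=\sum_{j=0}^{p-1}\bigl(\sqrt[p]{e_{i,j}}\,\theta^{j}\bigr)^{p}=\Bigl(\sum_{j=0}^{p-1}\sqrt[p]{e_{i,j}}\,\theta^{j}\Bigr)^{p},$$
so each $y_i$ is a $p$-th power in $(ML)^{\times}$; hence each $[x_i,y_i)_{p,ML}$ is split by \Cref{cyclic-alg-properties}, and therefore $A_{ML}$ is split.

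Next I would observe that $ML/M$ is a cyclic field extension of degree $p$: since $L/F$ is separable and $M/F$ is purely inseparable one has $L\cap M=F$, and $X^p-X-a$ stays irreducible over $M$ because any root in $M$ would lie in $M^p\subseteq F$, hence in $F$, contradicting $[L:F]=p$. As $ML$ splits $A_M$, we get $\ind A_M\mid[ML:M]=p$. If $\ind A_M=1$, then $M=F(\sqrt[p]{e_{i,j}})$ splits $A$ and \Cref{Albert-purins->decomp-palg} already gives $\lambda_p(A)\le np\le(n+1)p-1$. If $\ind A_M=p$, the underlying division algebra has degree $p$ and is split by the cyclic degree-$p$ extension $ML/M$, hence contains a subfield $M$-isomorphic to $ML$ and is consequently a cyclic $M$-algebra of degree $p$ (cf.\ \cite[Theorem 5.9]{Alb39}); then \Cref{multipurinsep-ind-reduce to p+cyclic}, applied to the purely inseparable extension $M/F$, yields $\lambda_p(A)\le\log_p[M:F]+(p-1)\le np+(p-1)=(n+1)p-1$.

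The computations are routine; the one design choice that matters is using the ``Frobenius basis'' $\{\theta^{jp}\}$ rather than $\{\theta^{j}\}$, which keeps $[M:F]\le p^{np}$ instead of $p^{np+1}$ and is precisely what produces the bound $(n+1)p-1$ rather than $(n+1)p$. I do not anticipate a genuine obstacle; the only points needing care are that $M^p\subseteq F$ (needed both to keep $ML/M$ Artin--Schreier and to invoke \Cref{multipurinsep-ind-reduce to p+cyclic}) and the separate treatment of the degenerate case $\ind A_M\le 1$.
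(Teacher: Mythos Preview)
Your argument is correct and follows essentially the same route as the paper: construct a purely inseparable extension $M/F$ with $M^p\subseteq F$ and $[M:F]\le p^{np}$ such that $A_{ML}$ is split, observe that $A_M$ is then Brauer equivalent to a cyclic $M$-algebra of degree $p$, and apply \Cref{multipurinsep-ind-reduce to p+cyclic}. The only difference is cosmetic: the paper obtains $M$ by invoking \Cref{C:multiquad-split-quaternion-char2} (applied to each cyclic factor of $A_L$), whereas you unpack that citation explicitly via the Frobenius basis $\{\theta^{jp}\}$---this is precisely the content of \cite[Lemma~2.3]{BBL2022} underlying \Cref{C:multiquad-split-quaternion-char2}.
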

\begin{proof}
    Since $L/F$ is a separable field extension and $\lambda_p(A_L)\leq n$, by \Cref{C:multiquad-split-quaternion-char2}, there exists a purely inseparable field extension $K/F$ with $K^p\subseteq F$ and $[K:F]\leq p^{n\cdot p}$ such that $A_{LK}$ is split. 
    As $LK/K$ is a cyclic field extension of degree $p$, $A_{K}$ is Brauer equivalent to a cyclic $K$-algebra of degree $p$.
    It follows by \Cref{multipurinsep-ind-reduce to p+cyclic} that $\lambda_p(A)\leq n\cdot p+p-1=(n+1)\cdot p-1$.
\end{proof}

\begin{cor}\label{behavior-p-symbol-length-p-ext}
    Let $A$ be a central simple $F$-algebra with $\exp A=p$.
    Let $L/F$ be a $p$-extension. Then $\lambda_p(A)\leq (\lambda_p(A_L)+1)\cdot[L:F]-1$.
\end{cor}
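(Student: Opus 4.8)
The plan is to induct on the length $n$ of a tower of cyclic degree-$p$ extensions realizing $L/F$, using \Cref{behavior-p-symbol-length-cyclic-ext-deg=p} as the inductive step. Write $L/F$ as a tower $F=L_0\subseteq L_1\subseteq\ldots\subseteq L_n=L$ with each $L_i/L_{i-1}$ cyclic of degree $p$, so $[L:F]=p^n$. The base case $n=0$ is $L=F$, where the claimed bound reads $\lambda_p(A)\le(\lambda_p(A_L)+1)\cdot1-1=\lambda_p(A_L)$, which holds trivially.

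For the inductive step, suppose the bound holds for all $p$-extensions arising from towers of length $n-1$, applied to any central simple algebra of exponent $p$ over any base field. The extension $L/L_1$ is a $p$-extension coming from the tower $L_1\subseteq L_2\subseteq\ldots\subseteq L_n$ of length $n-1$. Applying the induction hypothesis to the algebra $A_{L_1}$ over $L_1$ (which still has exponent $p$, or is split, in which case everything is easy), I get
\begin{equation*}
    \lambda_p(A_{L_1})\le\bigl(\lambda_p(A_L)+1\bigr)\cdot[L:L_1]-1=\bigl(\lambda_p(A_L)+1\bigr)\cdot p^{n-1}-1.
\end{equation*}
Now $L_1/F$ is cyclic of degree $p$, so \Cref{behavior-p-symbol-length-cyclic-ext-deg=p} with $n$ there equal to $\lambda_p(A_{L_1})$ gives
\begin{equation*}
    \lambda_p(A)\le\bigl(\lambda_p(A_{L_1})+1\bigr)\cdot p-1\le\Bigl(\bigl(\lambda_p(A_L)+1\bigr)\cdot p^{n-1}\Bigr)\cdot p-1=\bigl(\lambda_p(A_L)+1\bigr)\cdot p^n-1,
\end{equation*}
which is exactly the desired bound since $p^n=[L:F]$.

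The only subtlety — hardly an obstacle — is bookkeeping around degenerate cases: if $A_L$ is already split then $\lambda_p(A_L)=0$ and the induction still runs; if $A_{L_1}$ is split, apply \Cref{Albert-purins->decomp-palg} via \Cref{C:multiquad-split-quaternion-char2} (or simply note \Cref{behavior-p-symbol-length-cyclic-ext-deg=p} covers $n=0$) to conclude directly. One must also make sure the tower $L_1\subseteq\ldots\subseteq L_n$ genuinely witnesses $L/L_1$ as a $p$-extension with a length-$(n-1)$ tower, which is immediate from the definition. I expect the proof to be two or three lines: it is a clean iteration of \Cref{behavior-p-symbol-length-cyclic-ext-deg=p}, with the multiplicativity of $[L:F]$ along the tower producing the geometric factor $[L:F]$ in the final estimate.
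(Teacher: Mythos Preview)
Your proof is correct and follows essentially the same approach as the paper: induct on the length of the tower (equivalently, on $[L:F]$), peel off a cyclic degree-$p$ subextension $L_1/F$ at the bottom, apply the induction hypothesis to $A_{L_1}$ over the shorter tower $L/L_1$, and then use \Cref{behavior-p-symbol-length-cyclic-ext-deg=p} to descend from $L_1$ to $F$. Your handling of the degenerate cases is more explicit than necessary---\Cref{behavior-p-symbol-length-cyclic-ext-deg=p} already allows $n=0$, and the inductive inequality holds trivially when $A_{L_1}$ is split---but this does no harm.
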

\begin{proof}
    We prove the statement by induction on $[L:F]$, the case $[L:F]=1$ being trivially true. Assume now $[L:F]>1$. 
    Let $L'/F$ be a cyclic field extension of degree $p$ contained in $L/F$. 
    Note that $L/L'$ is a $p$-extension with $[L:L']<[L:F]$. 
    Then, it follows by induction hypothesis that $\lambda_p(A_{L'})\leq (\lambda_p(A_L)+1)\cdot[L:L']-1$. 
    We use \Cref{behavior-p-symbol-length-cyclic-ext-deg=p} and obtain that $\lambda_p(A)\leq (\lambda_p(A_{L'})+1)\cdot p -1\leq(\lambda_p(A_L)+1)\cdot[L:L']\cdot p-1= (\lambda_p(A_L)+1)\cdot[L:F]-1$. 
\end{proof}

\begin{prop}\label{p-symbol-length-Galois-(Z/2)^p}
    Let $A$ be a central simple $F$-algebra with $\exp A=p$.
    Let $n\in\mathbb{N}_+$ and assume that $A$ splits over a $p$-extension of degree $p^n$. 
    Then $\lambda_p(A)\leq2\cdot p^{n-1}-1$. 
\end{prop}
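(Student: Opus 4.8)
The plan is to prove the bound by induction on $n$, peeling off the cyclic degree-$p$ layers of the splitting $p$-extension one at a time and feeding \Cref{behavior-p-symbol-length-cyclic-ext-deg=p} back into itself. I would phrase the induction hypothesis over an arbitrary base field of characteristic $p$, so that it can be invoked after a base change.

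For the base case $n=1$ I would observe that a $p$-extension of degree $p$ is simply a cyclic field extension $L/F$ of degree $p$ (the defining tower has a single step). Since $L$ splits $A$ we have $\lambda_p(A_L)=0$, so \Cref{behavior-p-symbol-length-cyclic-ext-deg=p}, applied with the parameter $0\in\mathbb{N}$, gives $\lambda_p(A)\leq(0+1)\cdot p-1=p-1=2p^{0}-1$, as wanted.

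For the inductive step, with $n\geq2$, I would fix a tower $F=L_0\subseteq L_1\subseteq\cdots\subseteq L_n=L$ witnessing that the splitting extension $L/F$ is a $p$-extension, with each step cyclic of degree $p$. Then $L/L_1$ is a $p$-extension of degree $p^{n-1}$ splitting $A_{L_1}$, and $\exp A_{L_1}$ divides $p$. If $\exp A_{L_1}=p$, the induction hypothesis applied to $A_{L_1}$ over $L_1$ gives $\lambda_p(A_{L_1})\leq 2p^{n-2}-1$; if $A_{L_1}$ is split then $\lambda_p(A_{L_1})=0\leq 2p^{n-2}-1$ since $n\geq2$. In either case $\lambda_p(A_{L_1})\leq 2p^{n-2}-1$, and then \Cref{behavior-p-symbol-length-cyclic-ext-deg=p}, applied to $A$ (which still has $\exp A=p$) and the cyclic degree-$p$ extension $L_1/F$, yields
\[
\lambda_p(A)\leq(\lambda_p(A_{L_1})+1)\cdot p-1\leq(2p^{n-2}-1+1)\cdot p-1=2p^{n-1}-1,
\]
closing the induction.

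I do not expect a genuine obstacle: the substance is entirely in \Cref{behavior-p-symbol-length-cyclic-ext-deg=p}, and what remains is bookkeeping. The one point that needs care is that a single direct application of \Cref{behavior-p-symbol-length-p-ext} only delivers $\lambda_p(A)\leq[L:F]-1=p^{n}-1$, which is sharp only when $p=2$; the improvement to $2p^{n-1}-1$ comes precisely from carrying the stronger inductive bound $\lambda_p(A_{L_1})\leq 2p^{n-2}-1$ rather than the weaker $p^{n-1}-1$ that \Cref{behavior-p-symbol-length-p-ext} would give for $A_{L_1}$. I would also take a moment to record, at the start, that $\exp A_{L_1}\mid p$ so that the two cases above genuinely exhaust the possibilities.
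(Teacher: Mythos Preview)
Your induction step is fine, but the base case is broken by an arithmetic slip: you write $p-1=2p^{0}-1$, which is $1$, and this is false for every $p>2$. Applying \Cref{behavior-p-symbol-length-cyclic-ext-deg=p} with parameter $0$ only yields $\lambda_p(A)\leq p-1$, whereas the statement for $n=1$ demands $\lambda_p(A)\leq 1$. The correct base case is the elementary fact that an algebra split by a cyclic degree-$p$ extension $L/F$ is Brauer equivalent to a single cyclic algebra $[L/F,\sigma,b)$ of degree $p$; this is precisely what the paper uses (implicitly) when it asserts $\lambda_p(A_L)\leq 1$ at the penultimate layer. Once you replace your base case by this observation, your induction goes through.

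Your closing remark is also off the mark: a single application of \Cref{behavior-p-symbol-length-p-ext} does \emph{not} merely give $p^n-1$. The paper's proof is exactly one application of \Cref{behavior-p-symbol-length-p-ext}, but to the degree-$p^{n-1}$ subextension $L$ (the $(n-1)$st layer of the tower), where $\lambda_p(A_L)\leq 1$ by the fact just mentioned; this gives $(1+1)\cdot p^{n-1}-1=2p^{n-1}-1$ directly. Your inductive unrolling from the bottom is equivalent---\Cref{behavior-p-symbol-length-p-ext} is itself proved by the same induction---but the paper's one-line version is cleaner and avoids the need to set up a separate base case at all.
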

\begin{proof}
    By our assumption, there exists a $p$-extension $L/F$ with $[L:F]=p^{n-1}$ and $\lambda_p(A_L)\leq1$. 
    It follows by \Cref{behavior-p-symbol-length-p-ext} that $\lambda_p(A)\leq(1+1)\cdot[L:F]-1=2\cdot p^{n-1}-1$.
\end{proof}

\Cref{Albert-Tignol-Rowen-2-symbol-small degree} gives a better bound for $p=2$ and $n=3$, we combine this with \Cref{behavior-p-symbol-length-p-ext} to obtain an improvement compared to \Cref{p-symbol-length-Galois-(Z/2)^p} for $p=2$.

\begin{thm}\label{2-symbol-length-Galois-(Z/2)^n}
    Let $p=2$. Let $A$ be a central simple $F$-algebra with $\exp A=2$. 
    Let $n\in\mathbb{N}$ with $n\geq3$ and assume that $A$ splits over a $2$-extension of degree $2^n$. Then $\lambda_2(A)\leq5\cdot2^{n-3}-1$.
\end{thm}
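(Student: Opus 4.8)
The plan is to cut the given $2$-extension into a bottom piece of degree $2^{n-3}$ and a top piece of degree $2^3$, apply the known small-index bound of \Cref{Albert-Tignol-Rowen-2-symbol-small degree} over the intermediate field, and then descend to $F$ using \Cref{behavior-p-symbol-length-p-ext}. First I would fix a $2$-extension $M/F$ with $[M:F]=2^n$ that splits $A$, together with a tower $F=M_0\subseteq M_1\subseteq\ldots\subseteq M_n=M$ in which each step $M_i/M_{i-1}$ is cyclic of degree $2$. Put $L=M_{n-3}$. Then $L/F$ is a $2$-extension with $[L:F]=2^{n-3}$, while $M/L$ is a $2$-extension of degree $2^3=8$ splitting $A_L$; in particular $\ind A_L$ divides $8$.

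Next I would bound $\lambda_2(A_L)$. If $A_L$ is split, then $\lambda_2(A_L)=0$. Otherwise $\exp A_L=2$, since it divides $\exp A=2$, and $\ind A_L\in\{2,4,8\}$. In each of these cases \Cref{Albert-Tignol-Rowen-2-symbol-small degree} gives $\lambda_2(A_L)\leq4$: indices $2$ and $4$ give symbol length $1$ and $2$ respectively by part $(1)$, and index $8$ gives symbol length at most $4$ by part $(2)$. Hence $\lambda_2(A_L)\leq4$ unconditionally.

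Finally, applying \Cref{behavior-p-symbol-length-p-ext} to the $2$-extension $L/F$ yields
\[
\lambda_2(A)\leq(\lambda_2(A_L)+1)\cdot[L:F]-1\leq(4+1)\cdot2^{n-3}-1=5\cdot2^{n-3}-1,
\]
which is the claimed bound. There is no serious obstacle: the argument is a direct combination of the cited small-index theorem with the reduction along $p$-extensions established earlier in the paper. The only points requiring a moment's care are that splitting by a degree-$8$ extension forces $\ind A_L\mid 8$, and that $\exp A_L=2$ whenever $A_L$ is non-split, so that \Cref{Albert-Tignol-Rowen-2-symbol-small degree} genuinely applies.
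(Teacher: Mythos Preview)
Your proposal is correct and follows essentially the same approach as the paper: cut off an intermediate field $L$ with $[L:F]=2^{n-3}$, use \Cref{Albert-Tignol-Rowen-2-symbol-small degree} to bound $\lambda_2(A_L)\leq4$, and descend via \Cref{behavior-p-symbol-length-p-ext}. You simply spell out a bit more carefully why $\ind A_L\leq 8$ and why the exponent hypothesis of \Cref{Albert-Tignol-Rowen-2-symbol-small degree} is met.
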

\begin{proof}
    By our assumption, there exists a $2$-extension $L/F$ with $[L:F]=2^{n-3}$ such that $\ind A_L\leq8$.
    Hence by \Cref{Albert-Tignol-Rowen-2-symbol-small degree}, we have that $\lambda_2(A_L)\leq4$. 
    It follows by \Cref{behavior-p-symbol-length-p-ext} that $\lambda_2(A)\leq(4+1)\cdot[L:F]-1=5\cdot 2^{n-3}-1$.
\end{proof}

\section*{Acknowledgments}
This work was supported by the {Fonds Wetenschappelijk Onderzoek – Vlaanderen} in the FWO Odysseus Programme (project G0E6114N, \emph{Explicit Methods in Quadratic Form Theory}), by the {Fondazione Cariverona} in the programme Ricerca Scientifica di Eccellenza 2018 (project \emph{Reducing complexity in algebra, logic, combinatorics - REDCOM}), and by T\"{U}B\.{I}TAK-221N171.

\end{document}